\newtheorem{theorem}{Theorem}
\newtheorem{definition}[theorem]{Definition}
\newtheorem{corollary}[theorem]{Corollary}
\newtheorem{lemma}[theorem]{Lemma}
\newtheorem{landd}[theorem]{Lemma and Definition}
\newtheorem{remark}[theorem]{Remark}
\theoremstyle{definition}
\newcommand{\cc}{\mathbb{C}}
\newcommand{\ddb}{\partial\bar{\partial}}
\newcommand{\db}{{\bar{\partial}}}
\newcommand{\dd}{\partial}
\newcommand{\zb}{\bar{z}}
\newcommand{\cinf}{C^\infty}
\newcommand{\im}{\text{Im }}
\newcommand{\xx}{\mathcal{X}}
\newcommand{\xt}{X_t}
\newcommand{\h}[2][DR]{H^{#2}_{#1}}
\newcommand{\xc}{(X,\cc)}
\newcommand{\pp}[2]{\varphi^{#1\overline{#2}}}
\newcommand{\pq}{\varphi^1}
\newcommand{\pw}{\varphi^2}
\newcommand{\pe}{\varphi^3}
\newcommand{\pqb}{\bar{\varphi}^1}
\newcommand{\pwb}{\bar{\varphi}^2}
\newcommand{\peb}{\bar{\varphi}^3}
\title{Polarisation of SKT Calabi-Yau $\ddb$-manifolds by Aeppli classes}
\author{Yi Ma}
\begin{document}
	\maketitle
	\begin{abstract}
		Given a $\ddb$-manifold $X$ with trivial canonical bundle and carrying a metric $\omega$ such that $\ddb\omega=0$, we introduce the concept of small deformations of $X$ polarised by the Aeppli cohomology class $[\omega]_A$ of an SKT metric $\omega$. There is a correspondence between the manifolds polarised by $[\omega]_A$ in the Kuranishi family of $X$ and the Bott-Chern classes that are primitive in a sense that we define. We also investigate the existence of a primitive element in an arbitrary Bott-Chern primitive class and compare the metrics on the base space of the subfamily of manifolds polarised by $[\omega]_A$ within the Kuranishi family.
	\end{abstract}
	\section{Introduction}
	Let $X$ be a compact complex manifold of dimension $n$. Recall that a manifold is said to be a \textit{$\ddb$-manifold} if for any pure-type $ d $-closed form $ u $, we have the following equivalences:  \\
	$$u\text{ is }d\text{-exact}\Leftrightarrow u\text{ is }\partial\text{-exact} \Leftrightarrow u\text{ is }\db\text{-exact}\Leftrightarrow u\text{ is }\ddb\text{-exact.}$$
	\\A complex manifold is called \textit{Calabi-Yau} if its canonical bundle $K_X$ is trivial.\smallskip
	
	A Hermitian metric $\omega$, seen as a positive definite $\cinf \ (1,1)$-form, on a complex manifold $X$ is called strong K\"ahler with torsion (SKT for short) if $\ddb\omega=0$ and it is called Hermitian-symplectic \cite{streets2010parabolic} if $\omega$ is the component of bidegree $(1,1)$ of a real smooth $d$-closed $2$-form on $X$. Obviously, on a $\ddb$-manifold, a metric is SKT if and only if it is Hermitian-symplectic. The study of SKT metrics (also called pluriclosed, see \cite{streets2010parabolic}) has received a lot of attention over recent years. A necessary condition for the existence of a smooth family of SKT metrics on a differentiable family of complex manifolds is given in \cite{DefofSKT}. The existence of a left-invariant SKT structure on any even-dimensional compact Lie group G is obtained in \cite{MR2797819}.\smallskip
	
	The $\ddb$-property is open under holomorphic deformations of the complex structure by \cite{wu2006geometry}. Namely, if $\pi:\xx\rightarrow B$ is a proper holomorphic submersion between complex manifolds, and $X_0:=\pi^{-1}(0)$ is a $\ddb$-manifold, then $X_t:=\pi^{-1}(t)$ is a $\ddb$-manifold for all $t$ in a small neighbourhood of $0$. Moreover, the Hodge numbers are independent of $t$ in this case. If $X_0$ is a Calabi-Yau manifold and $h^{n,0}$ does not jump, which means that $h^{n,0}(t)=h^{n,0}(0)$ for $t$ in a small neighbourhood of $0$, then $X_t$ is again a Calabi-Yau manifold for $t$ close to $0$. Though the SKT condition is not deformation open, if $X_0$ is an SKT $\ddb$-manifold, then $X_t$ is again an SKT $\ddb$-manifold for $t$ in a small neighbourhood of $0$. Indeed, the Hermitian-symplectic condition is deformation open by \cite{SongYang} (see also \cite{Bellitir}). Putting these things together, we get that if $X_0$ is an SKT Calabi-Yau $\ddb$-manifold, then for all $t$ close to $0$, $X_t$ is again an SKT Calabi-Yau $\ddb$-manifold.\smallskip

	Now, fix an SKT metric $\omega$ on a Calabi-Yau SKT $\ddb$-manifold $X$. Let $(X_t)_{t\in B}$ be the Kuranishi family of $X$. By \cite{bogomolov1978hamiltonian}, \cite{tian1987smoothness}, \cite{Tod89} (see also \cite{popovici2019holomorphic}), the Kuranishi family is unobstructed. In particular, $B$ can be seen as a ball about $0$ in $\h[]{0,1}(X_0,T^{1,0}X_0)$, where $T^{1,0}X_0$ is the holomorphic tangent bundle of $X_0$. We can define the notion of $X_t$ being polarised by the SKT Aeppli class $[\omega]_A$ by requiring the canonical image $\{\omega\}_{DR}$ of $[\omega]_A$ in $\h{2}\xc$, where $X$ is the $C^\infty$ manifold underlying the fibres $X_t$, to be of type $(1,1)$ for the complex structure of $\xt$ (see Definition \ref{def1}). Somehow we can view the set $B_{[\omega]}$ of all the fibres $X_t$ polarised by $[\omega]_A$ as the intersection of $B$ and $H^{0,1}(X,T^{1,0}X)_{[\omega]}$, the latter being defined in Lemma \ref{lem3} as a vector subspace of $\h[]{0,1}(X_0,T^{1,0}X_0)$. 
	
	In section \ref{secprim}, we define the primitivity of certain Bott-Chern classes w.r.t. $[\omega]_A$. Then we can identify the space of $H^{0,1}(X_0,T^{1,0}X_0)_{[\omega]}$ with the space of Bott-Chern primitive classes. 
	
	In section \ref{secmetrics}, we compare the Weil-Petersson metric and the metric induced by the period map on the base space $B_{[\omega]}$ of the family of $[\omega]$-polarised small deformations of $X$. In the case of K\"ahler polarised deformations, these two metrics coincide with each other  by \cite[Theorem 2]{tian1987smoothness}.
	
	This work was inspired by \cite{popovici2019holomorphic} where the notion of small deformations co-polarised by a balanced class was introduced and studied. Besides, deformations co-polarised by a Gauduchon class was studied in \cite{unknown}.
	
	\smallskip
	\textbf{Acknowledgements.} The author is supported by China Scholarship Council. He	is grateful to his Ph.D. supervisor Dan Popovici for suggesting the problem and for his constant guidance and support.
	\section{Preliminaries}
		Recall that a compact complex manifold $X$ is a $\ddb$-manifold if and only if there are canonical isomorphisms between the Bott-Chern, Aeppli and Dolbeaut cohomology groups, i.e. the canonical maps
		{\footnotesize\begin{align*}
			&H_{BC}^{p,q}(X,\cc)=\dfrac{\ker\partial\cap\ker\db}{\text{Im}\ddb}&\longrightarrow& H_{\db}^{p,q}(X,\cc)=\dfrac{\ker\db}{\text{Im}\db}&\longrightarrow& H_{A}^{p,q}(X,\cc)=\dfrac{\ker\ddb}{\text{Im}\partial+\text{Im}\db}\\
			&\hspace{5em}[\alpha]_{BC}&\longmapsto&\hspace{5em}[\alpha]_{\partial}&\longmapsto&\hspace{5em}[\alpha]_{A}
		\end{align*}}are isomorphisms for all $0\le p,q\le n$. For $\ddb$-manifolds, we have a Hodge decomposition in the sense that the canonical map with \\$d\alpha^{p,q}=0, \forall p,q=k-p$
		\begin{align*}
			\bigoplus_{p+q=k}\h[A]{p,q}\xc\cong\h[DR]{k}\xc\\
			([\alpha^{p,q}]_A)_{p+q=k}\mapsto\{\sum_{p+q=k}\alpha^{p,q}\} \qquad
		\end{align*}
		is an isomorphism for all $0\le k\le 2n$, where all the forms $\alpha^{p,q}$ are $d$-closed. The $\ddb$-assumption on $X$ guarantees that every Aeppli cohomology class can be represented by a $d$-closed form. We could use any of the Bott-Chern, Aeppli or Dolbeaut cohomologies here. 
		As in \cite{10.2307/1969879} and \cite{schweitzer2007autour}, Bott-Chern and Aeppli Laplacians $\Delta_{BC}, \Delta_{A}:\cinf_{p,q}\xc\rightarrow\cinf_{p,q}\xc$ are defined as:
		\begin{align*}
			\Delta_{BC}=\dd^*\dd+\db^*\db+(\ddb)(\ddb)^*+(\ddb)^*(\ddb)+(\dd^*\db)(\dd^*\db)^*+(\dd^*\db)^*(\dd^*\db),\\
		\Delta_A=\dd\dd^*+\db\db^*+(\ddb)(\ddb)^*+(\ddb)^*(\ddb)+(\dd\db^*)(\dd\db^*)^*+(\dd\db^*)^*(\dd\db^*),
		\end{align*}
		and then we have
		\begin{align}
		\label{kerBC}	\ker\Delta_{BC}=\ker\dd\cap\ker\db\cap\ker(\ddb)^*,\\
		\label{kerA}	\ker\Delta_{A}=\ker(\ddb)\cap\ker\dd^*\cap\ker\db^*.
		\end{align}
		Let $\omega$ be an SKT metric on a $\ddb$-manifold $X$. By the $\ddb$-property, there exists a form $\alpha\in\cinf_{0,1}(X,\cc)$, such that
		\begin{equation}
			\label{defalpha}\db\omega=\ddb\alpha.
		\end{equation}
		
		Note that $d(\omega+\dd\alpha+\db\bar{\alpha})=0$, so $\omega+\dd\alpha+\db\bar{\alpha}$ is a $d$-closed representative of  $[\omega]_A$. 
		We define $\{\omega\}_{DR}$ (resp. $[\omega]_\db$) to be the image of $[\omega]_A$ under the canonical injection $\h[A]{1,1}\xc\hookrightarrow\h{2}\xc$ (resp. the isomorphism $\h[A]{1,1}\xc\overset{\cong}\rightarrow\h[\db]{1,1}\xc$), which means that $\{\omega\}_{DR}=\{\omega+\dd\alpha+\db\bar{\alpha}\}_{DR}$ (resp. $[\omega]_\db=[\omega+\dd\alpha]_\db$).

			Given a Calabi-Yau manifold $X$, we fix a non-vanishing holomorphic $n$-form $u$ on $X$. Note that $u$ exists and is unique up to a multiplicative constant since $K_X$ is trivial. It defines the \textit{Calabi-Yau isomorphism}:
		\begin{align*}
			T_{[u]}:H^{0,1}(X,T^{1,0}X)&\longrightarrow H_\db^{n-1,1}(X,\cc)\\
			[\theta]&\longmapsto[\theta\lrcorner u],
		\end{align*}
		where the operator $\cdot\lrcorner\cdot$ combines the contraction of $u$ by the vector field component of $\theta$ with the multiplication by the $(0,1)$-form component.

		For a primitive form $v$ of bidegree $(p,q)$, we will often use the following formula (see \cite[Proposition 6.29]{voisin_2002}) 
		\begin{equation}
			\star v=(-1)^{\frac{(p+q)(p+q+1)}{2}}i^{p-q}\dfrac{\omega^{n-p-q}\wedge v}{(n-p-q)!} \label{voiprim}
		\end{equation}
		for the Hodge star operator $\star$.
		
	\section{Polarisation by SKT classes}
		Let $X$ be a compact SKT Calabi-Yau $\ddb$-manifold and let $\omega$ be an SKT metric on it. 
		Let $\pi:\xx\rightarrow B$ be the Kuranishi family of $X$. In a small neighbourhood of $0$, $X_t:=\pi^{-1}(t)$ is again a Calabi-Yau SKT $\ddb$-manifold, and we have the following Hodge decomposition by \cite{schweitzer2007autour}
		\begin{equation}
				\h{2}(X,\cc)\simeq\h[A]{2,0}(\xt,\cc)\oplus\h[A]{1,1}(\xt,\cc)\oplus\h[A]{0,2}(\xt,\cc),\quad t\sim0,\label{hd}
		\end{equation}
		where "$\cong$" stands for the canonical isomorphism whose inverse is defined by $([\alpha^{2,0}]_A,[\alpha^{1,1}]_A,[\alpha^{0,2}]_A)\mapsto\{\alpha^{2,0}+\alpha^{1,1}+\alpha^{0,2}\}_{DR}$, where $d\alpha^{p,2-p}=0$ for $p=0,1,2$.		
		\begin{definition}\label{def1}
			Fix the Aeppli class $[\omega]_A\in H^{1,1}_A(X,\cc)$ of an SKT metric $\omega$ on $X_0=X$. For $t\in B$, we say that $X_t$ is polarised by $[\omega]_A$ if the projection $[\omega]_{A,t}^{0,2}$ of $\{\omega\}_{DR}$ onto $H^{0,2}_A(X_t,\cc)$ w.r.t (\ref{hd}) is $0$.
			
			Denote by $B_{[\omega]}$ the set of $t\in B$ such that $X_t$ is polarised by $[\omega]_A$, namely
			\[B_{[\omega]}=\{t\in B|[\omega]_{A,t}^{0,2}=0\in\h[A]{0,2}(\xt,\cc)\}.
			\]
		\end{definition}
			This means that  $\{\omega\}_{DR}$ is of $J_t$-pure-type $(1,1)$ for $t\in B_{[\omega]}$ since $[\omega]_{A,t}^{2,0}=0$ if and only if $[\omega]_{A,t}^{0,2}=0$. Indeed, $\{\omega\}_{DR}$ being real, $[\omega]_{A,t}^{2,0}$ is the conjugate to $[\omega]_{A,t}^{0,2}$.
		
		\begin{theorem}
			
			Let $\omega$ be an SKT metric on a compact $\ddb$-manifold $X$ and let $\pi:\xx\rightarrow B$ be its Kuranishi family. Consider $\gamma^{1,1}_t\in\h[A]{1,1}(\xt,\cc)$ the Aeppli component of $J_t$-type $(1,1)$ of $\{\omega\}_{DR}$ w.r.t. (\ref{hd}). Then there exists an SKT metric $\omega_t\in\gamma^{1,1}_t$ for all $t$ in a small neighbourhood of $0$.
		\end{theorem}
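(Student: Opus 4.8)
The plan is to reduce the statement to an openness-of-positivity argument. Observe first that, by definition, any representative of an Aeppli class in $\h[A]{1,1}(\xt,\cc)$ is $\partial_t\bar\partial_t$-closed; hence a representative of $\gamma^{1,1}_t$ is automatically an SKT metric as soon as it is positive definite. So it suffices to produce a smooth family $t\mapsto\eta_t$ of real $J_t$-$(1,1)$-forms with $[\eta_t]_A=\gamma^{1,1}_t$ and $\eta_0=\omega$. Indeed, $\omega>0$ and $X$ is compact, so continuity in $t$ forces $\eta_t>0$ for all $t$ in a neighbourhood of $0$, and then $\omega_t:=\eta_t$ is the desired SKT metric.

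To build such a family I would start from the harmonic representatives. Because the Hodge numbers are constant along the Kuranishi family, the spaces $\ker\Delta_{A,t}\cap\cinf_{1,1}(\xt,\cc)$ form a smooth finite-rank bundle over $B$ and the $\Delta_{A,t}$-harmonic projection depends smoothly on $t$ (standard elliptic/Kodaira--Spencer theory). The class $\gamma^{1,1}_t$, being the $J_t$-$(1,1)$-Aeppli component of the fixed de Rham class $\{\omega\}_{DR}$ in (\ref{hd}), varies smoothly with $t$, so its $\Delta_{A,t}$-harmonic representative $h_t$ is a smooth family with $[h_t]_A=\gamma^{1,1}_t$; moreover each $h_t$ is real, the harmonic representative of a real class being real. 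At $t=0$ we have $\gamma^{1,1}_0=[\omega]_A$, so $\omega$ and $h_0$ lie in the same Aeppli class; as both are real, after averaging with the conjugate their difference can be written as $\omega-h_0=\partial a+\bar\partial\bar a$ for some $a\in\cinf_{0,1}(X,\cc)$.

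The remaining point is to transport this correction to nearby fibres. Let $a_t$ denote the $J_t$-$(0,1)$-component of the fixed $1$-form $a$; this is a smooth family with $a_0=a$. I would then set
\[
\eta_t:=h_t+\partial_t a_t+\bar\partial_t\bar a_t .
\]
This $\eta_t$ is a smooth family of real $J_t$-$(1,1)$-forms; it is $\partial_t\bar\partial_t$-closed because $h_t$ is $\Delta_{A,t}$-harmonic while $\partial_t\bar\partial_t(\partial_t a_t)=-\partial_t\partial_t\bar\partial_t a_t=0$ and $\partial_t\bar\partial_t(\bar\partial_t\bar a_t)=0$; and it represents $\gamma^{1,1}_t$, since $\partial_t a_t+\bar\partial_t\bar a_t\in\text{Im}\,\partial_t+\text{Im}\,\bar\partial_t$ is Aeppli-exact, whence $[\eta_t]_A=[h_t]_A=\gamma^{1,1}_t$. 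Finally $\eta_0=h_0+\partial a+\bar\partial\bar a=\omega>0$, and the openness argument of the first paragraph concludes.

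The hard part is the smooth dependence on $t$ of the harmonic data, i.e. that $h_t$ (and hence $\eta_t$) varies continuously; this is exactly where the constancy of the Aeppli Hodge numbers along the Kuranishi family enters. Once that is granted, positivity is a soft open condition and the rest is a formal bidegree computation, so I expect no further obstruction.
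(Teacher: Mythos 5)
Your reduction to positivity, the reality trick $\omega-h_0=\dd a+\db\bar a$, and the transport $\eta_t:=h_t+\dd_t a_t+\db_t\bar a_t$ are all sound, and your overall scaffolding (produce a smooth family of real $J_t$-$(1,1)$-forms representing $\gamma^{1,1}_t$ and equal to $\omega$ at $t=0$, then invoke openness of positivity on a compact manifold) matches the paper's. But there is a genuine gap at the step you yourself flag as ``the hard part'': the smooth (even continuous) dependence of $h_t$ on $t$ does \emph{not} follow from constancy of the Aeppli Hodge numbers plus Kodaira--Spencer elliptic theory. Those ingredients give smoothness of the family of harmonic projectors $P_{A,t}$ onto $\ker\Delta_{A,t}$; to conclude that $h_t=P_{A,t}\sigma_t$ varies smoothly you still need a smooth family of forms $\sigma_t$ representing $\gamma^{1,1}_t$ to feed into the projector, i.e.\ a concrete realization of how the purely cohomological component $\gamma^{1,1}_t$ of the fixed class $\{\omega\}_{DR}$ under (\ref{hd}) varies with $t$. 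Producing such a family is the entire content of the theorem, and your proposal simply asserts it (``the class $\gamma^{1,1}_t$ varies smoothly with $t$'') without a mechanism.

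The paper fills exactly this hole, and in a way that makes the harmonic detour unnecessary. Using the $\ddb$-property one writes $\db\omega=\ddb\alpha$ as in (\ref{defalpha}) and forms the real $d$-closed $2$-form $\hat\omega=-\dd\bar\alpha+\omega-\db\alpha$, which represents $\{\omega\}_{DR}$ and whose $J_0$-$(1,1)$-component is \emph{exactly} $\omega$. If $\hat\omega=\Omega_t^{2,0}+\omega_t^{1,1}+\Omega_t^{0,2}$ is its $J_t$-type decomposition, then comparing $\hat\omega$ with $d$-closed pure-type representatives of the components of $\{\hat\omega\}_{DR}$ shows the difference is $d$-exact, so its $J_t$-$(1,1)$-part lies in $\text{Im}\,\dd_t+\text{Im}\,\db_t$ and hence $[\omega_t^{1,1}]_A=\gamma^{1,1}_t$; moreover $\omega_t^{1,1}$ is real, $\dd_t\db_t$-closed (from $d\hat\omega=0$), depends smoothly on $t$ because the pointwise type decomposition does, and equals $\omega$ at $t=0$. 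This single family $\omega_t^{1,1}$ already does everything you wanted $\eta_t$ to do: no harmonic representatives, no auxiliary family of metrics for $\Delta_{A,t}$, and no correction potential are needed. In short, closing your gap requires proving precisely this statement, at which point your $h_t$, $a$, $a_t$ construction collapses to the paper's one-line choice of representative.
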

		\begin{proof}

			By the definition (\ref{defalpha}) of $\alpha$, we construct two $d$-closed forms:
			\begin{align*}
				&\tilde{\omega}=\omega+\dd\alpha+\db\bar{\alpha},\\
				&\hat{\omega}=-\dd\bar{\alpha}+\omega-\db\alpha.
			\end{align*}
		We know that $\tilde{\omega}$ is of $J_0$-bidegree $(1,1)$ and $\omega$ is the $J_0$-$(1,1)$-component of the 2-form $\hat{\omega}$. Since $\tilde{\omega}=\hat{\omega}+d(\alpha+\bar{\alpha})$, we have that $\{\tilde{\omega}\}_{DR}=\{\hat{\omega}\}_{DR}$
		
		Decompose $\hat{\omega}$ into components of pure $J_t$-type:
		\[\hat{\omega}=\Omega_t^{2,0}+\omega_t^{1,1}+\Omega_t^{0,2}.\]
		We know that $\omega_0^{1,1}=\omega$. Moreover, $\omega_t^{1,1}$ is real and $\dd_t\db_t$-closed because $\hat{\omega}$ is real and $d$-closed. By the continuity of $(\omega^{1,1}_t)_t$ with respect to $t$ and $\omega_0^{1,1}=\omega>0$, we also have $\omega_t^{1,1}>0$ for $t$ near $0$. This ensures that $\omega^{1,1}_t$ is an SKT metric on $\xt$. Moreover, we have $\omega_t^{1,1}\in\gamma_t^{1,1},\ \forall t\sim0$. We put $\omega_t:=\omega_t^{1,1},\ \forall t\sim0$ and we are done.
	\end{proof}

		Recall that $\h[]{0,1}(X_0,T^{1,0}X_0):=${\footnotesize$\dfrac{\ker\db:\cinf_{0,1}(X_0,T^{1,0}X_0)\rightarrow\cinf_{0,2}(X_0,T^{1,0}X_0)}{\text{Im}\db:\cinf_{0,0}(X_0,T^{1,0}X_0)\rightarrow\cinf_{0,1}(X_0,T^{1,0}X_0)}$}, where $\db$ is the holomorphic structure of $T^{1,0}X_0$. Because the Kodaira-Spencer map gives an isomorphism between $T_0B$ and $\h[]{0,1}(X_0,T^{1,0}X_0)$, after possibly
		shrinking $B$, we can view the base space $B$ as an open subset of $\h[]{0,1}(X_0,T^{1,0}X_0)$. Then we have the following lemma:
		
		\begin{lemma}\label{lem3}
			Consider the following subspace of $H^{0,1}(X,T^{1,0}X)$:
			\begin{align*}
				H^{0,1}(X,T^{1,0}X)_{[\omega]}:&=\{[\theta]\in H^{0,1}(X,T^{1,0}X)|[\theta\lrcorner\zeta]_A=0\in H_A^{0,2}\xc\}\\
				&=\{[\theta]\in H^{0,1}(X,T^{1,0}X)\arrowvert[\theta\lrcorner\zeta]_\db=0\in H_{\db}^{0,2}\xc\},
			\end{align*}
			where $\zeta$ is an arbitrary representative in $[\omega]_\db$. It is well defined and
			\[T^{1,0}_0B_{[\omega]}= H^{0,1}(X,T^{1,0}X)_{[\omega]}.\]
			Or locally we can view this as
			$$B_{[\omega]}=B\cap H^{0,1}(X,T^{1,0}X)_{[\omega]}.$$
		\end{lemma}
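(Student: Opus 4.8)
The plan is to reduce the polarisation condition to a linear condition on the Kodaira--Spencer class by linearising the Aeppli $(0,2)$-projection at $t=0$, after first checking that $H^{0,1}(X,T^{1,0}X)_{[\omega]}$ is intrinsically defined.

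\emph{Well-definedness.} First I would fix a $\bar\partial$-closed representative $\theta$ of the class $[\theta]$ and, as $\bar\partial$-closed $(1,1)$-representative of $[\omega]_{\bar\partial}$, the form $\zeta=\tilde{\omega}=\omega+\partial\alpha+\bar\partial\bar\alpha$ built in the previous theorem: it is $d$-closed, of $J_0$-type $(1,1)$, and satisfies $[\tilde{\omega}]_{\bar\partial}=[\omega]_{\bar\partial}$. Using the Leibniz rule $\bar\partial(\theta\lrcorner\psi)=(\bar\partial\theta)\lrcorner\psi\pm\theta\lrcorner\bar\partial\psi$ for the contraction operator together with $\bar\partial\theta=0$ and $\bar\partial\zeta=0$, I obtain that $\theta\lrcorner\zeta$ is a $\bar\partial$-closed $(0,2)$-form, so that both $[\theta\lrcorner\zeta]_{\bar\partial}\in H^{0,2}_{\bar\partial}\xc$ and $[\theta\lrcorner\zeta]_A\in H^{0,2}_A\xc$ are defined. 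Since on a $(0,2)$-form $\mathrm{Im}\,\partial$ is trivial, Aeppli-exactness coincides with $\bar\partial$-exactness in this bidegree, and combined with the $\ddb$-isomorphism $H^{0,2}_{\bar\partial}\cong H^{0,2}_A$ this yields the equivalence of the two defining conditions. Independence of the choices is then the same Leibniz computation: replacing $\zeta$ by $\zeta+\bar\partial\mu$ changes $\theta\lrcorner\zeta$ by $\pm\bar\partial(\theta\lrcorner\mu)$, and replacing $\theta$ by $\theta+\bar\partial\rho$ changes it by $\pm\bar\partial(\rho\lrcorner\zeta)$, both $\bar\partial$-exact; as the condition is linear in $[\theta]$, the set $H^{0,1}(X,T^{1,0}X)_{[\omega]}$ is a well-defined vector subspace.

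\emph{Linearisation.} Next I would view $t\mapsto[\omega]^{0,2}_{A,t}$ as a section $F$ of the vector bundle over $B$ with fibre $H^{0,2}_A(X_t)$; because the Hodge numbers are constant in the family this bundle trivialises near $0$, so $F$ becomes a holomorphic map $B\to H^{0,2}_A(X_0)$ with $F(0)=0$ and $B_{[\omega]}=F^{-1}(0)$. Writing the Kuranishi Beltrami differential as $\varphi(t)$ with $\varphi(0)=0$ and $\dot\varphi(0)=\theta$, the class $[\omega]^{0,2}_{A,t}$ is represented by the $J_t$-$(0,2)$-projection $\pi^{0,2}_t(\tilde{\omega})$ of the fixed $d$-closed form $\tilde{\omega}$. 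Since $\tilde{\omega}$ is of $J_0$-type $(1,1)$ we have $\pi^{0,2}_0(\tilde{\omega})=0$, hence $\pi^{0,2}_t(\tilde{\omega})=O(t)$ and its derivative at $0$ is the contraction of $\tilde{\omega}$ by $\dot\varphi(0)$, namely $\theta\lrcorner\tilde{\omega}$ up to a nonzero constant. Thus $dF_0([\theta])=c\,[\theta\lrcorner\tilde{\omega}]_A$, whose kernel is exactly $H^{0,1}(X,T^{1,0}X)_{[\omega]}$ taken with $\zeta=\tilde{\omega}$, and by the previous step this is independent of that choice.

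\emph{Conclusion and main obstacle.} Finally, $T^{1,0}_0B_{[\omega]}\subseteq\ker dF_0$ is automatic, and for the reverse inclusion I would argue that $F$ has locally constant rank near $0$: the target dimension $\dim H^{0,2}_A(X_t)$ is constant, and $dF_t$ is again a contraction-type map into $H^{0,2}_A(X_t)$, so by semicontinuity its rank is constant in a neighbourhood of $0$. The constant-rank theorem then makes $B_{[\omega]}=F^{-1}(0)$ a smooth (complex, as $F$ is holomorphic) submanifold with $T^{1,0}_0B_{[\omega]}=\ker dF_0=H^{0,1}(X,T^{1,0}X)_{[\omega]}$, which is the asserted equality; reading this submanifold as the locus cut out inside $B\subseteq H^{0,1}(X,T^{1,0}X)$ by the linear conditions gives the informal identification $B_{[\omega]}=B\cap H^{0,1}(X,T^{1,0}X)_{[\omega]}$. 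I expect the main obstacle to be precisely the linearisation together with this smoothness point: one must compute $\tfrac{d}{dt}\big|_0\pi^{0,2}_t(\tilde{\omega})$ carefully, tracking the variation of the $J_t$-$(0,1)$-coframe and confirming that, modulo coframe corrections which are higher order exactly because $\pi^{0,2}_0(\tilde{\omega})=0$, the derivative is genuinely $\theta\lrcorner\tilde{\omega}$, and then ensure both that the resulting cohomology class is read off correctly in the trivialised bundle and that the rank of $dF_t$ is indeed constant.
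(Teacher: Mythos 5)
Your route is essentially the paper's own: the well-definedness step (Leibniz rule for $\db(\theta\lrcorner\cdot)$, plus the remark that in bidegree $(0,2)$ Aeppli-exactness reduces to $\db$-exactness) is the paper's argument, and your linearisation $dF_0([\theta])=c\,[\theta\lrcorner\tilde{\omega}]_A$ is exactly the paper's Gauss--Manin identity $\nabla_{\frac{\dd}{\dd t_i}|_{t=0}}[\omega]_{A,t}^{0,2}=[\theta\lrcorner\zeta]_A$. Your choice of representative $\pi^{0,2}_t(\tilde{\omega})$ for $[\omega]_{A,t}^{0,2}$ is also legitimate, since the $J_t$-$(0,2)$-components of two $d$-closed representatives of $\{\omega\}_{DR}$ differ by a $\db_t$-exact, hence Aeppli-exact, form.

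The one step that fails as written is your justification of the reverse inclusion $\ker dF_0\subseteq T^{1,0}_0B_{[\omega]}$, namely ``by semicontinuity its rank is constant in a neighbourhood of $0$''. The rank of a holomorphic (or smooth) map is only \emph{lower} semicontinuous: it can jump up at points arbitrarily close to $0$, so semicontinuity alone yields nothing -- compare $F(t_1,t_2)=t_1t_2$, whose differential vanishes at $0$ while $F^{-1}(0)$ is a singular curve whose tangent cone is strictly smaller than $\ker dF_0=\cc^2$. What closes the argument is that $dF_0$ already has \emph{maximal} rank: the map $[\theta]\mapsto[\theta\lrcorner\zeta]_\db\in H^{0,2}_\db\xc$ is surjective, because contraction against the invertible coefficient matrix $(\omega_{j\bar{k}})$ is surjective at the level of forms; this is precisely the surjectivity the paper establishes in the proof of the Corollary following Theorem \ref{thm}, where it gives $\dim_\cc H^{0,1}(X,T^{1,0}X)_{[\omega]}=h^{n-1,1}-h^{0,2}$. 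Granting that, lower semicontinuity together with the constancy of $h^{0,2}$ along the family forces $dF_t$ to be surjective for all $t$ near $0$, so $F$ is a submersion and the implicit function theorem delivers both the smoothness of $B_{[\omega]}=F^{-1}(0)$ and the equality $T^{1,0}_0B_{[\omega]}=\ker dF_0=H^{0,1}(X,T^{1,0}X)_{[\omega]}$. So keep your structure, but replace the semicontinuity claim by this surjectivity input; for what it is worth, the paper's own proof stops at the Gauss--Manin identity and leaves the smoothness of $B_{[\omega]}$ implicit, so your instinct to address the reverse inclusion is sound -- it simply cannot be closed by semicontinuity alone.
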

	\begin{proof}
		By \[\db(\theta\lrcorner\beta)=\db\theta\lrcorner\beta+(-1)^q\theta\lrcorner\db\beta,\quad \forall\theta\in\cinf_{0,q}(X,T^{1,0}X), \beta\in\cinf_{1,q'}\xc,
		\]
		we see that for $\theta$ a representative in $[\theta]$, $\db(\theta\lrcorner\zeta)=0$ 
		and
		\begin{align*}
			\theta\lrcorner(\zeta+\db\zeta')=\theta\lrcorner\zeta+\db(\theta\lrcorner\zeta')\\
			(\theta+\db\theta')\lrcorner\zeta=\theta\lrcorner\zeta+\db(\theta'\lrcorner\zeta)
		\end{align*}
		for $\theta'\in\cinf(X,T^{1,0}X),\zeta'\in\cinf_{1,0}\xc$. Hence the classes $[\theta\lrcorner\zeta]_\db$ and $[\theta\lrcorner\zeta]_A$ are independent of the choices of representatives of $[\theta]$ and $[\zeta]_\db$. Therefore, the space $H^{0,1}(X,T^{1,0}X)_{[\omega]}$ is well defined.
		 By the $\ddb$-property, the classes $[\theta\lrcorner\zeta]_A$ and $[\theta\lrcorner\zeta]_\db$ correspond to each other under the isomorphism $\h[A]{0,2}\xc\overset{\cong}{\longrightarrow}\h[\db]{0,2}\xc$, so we also have
		\begin{align*}
			&\{[\theta]\in H^{0,1}(X,T^{1,0}X)|[\theta\lrcorner\zeta]_A=0\in H_A^{0,2}\xc\}\\
			=&\{[\theta]\in H^{0,1}(X,T^{1,0}X)\arrowvert[\theta\lrcorner\zeta]_\db=0\in H_{\db}^{0,2}\xc\}
		\end{align*}
		For $t$ near $0$, $X_t$ is a $\ddb$-manifold, so we have the Hodge decompositions:
		\begin{align*}
			\h{2}(X,\cc)=&\h[\db]{2,0}(\xt,\cc)\oplus\h[\db]{1,1}(\xt,\cc)\oplus\h[\db]{0,2}(\xt,\cc)\\
			\simeq&\h[A]{2,0}(\xt,\cc)\oplus\h[A]{1,1}(\xt,\cc)\oplus\h[A]{0,2}(\xt,\cc).
		\end{align*}
		Take a vector in $T^{1,0}_0B_{[\omega]}$, say $\frac{\dd}{\dd t_i}|_{t=0}$. Denote by $[\theta]$ the image of it under the Kodaira-Spencer map $\rho:T^{1,0}_0B\rightarrow\h[]{0,1}(X_0,T^{1,0}X_0)$. Then we have $\nabla_{\frac{\dd}{\dd t_i}|_{t=0}}[\omega]_{A,t}^{0,2}=[\theta\lrcorner\zeta]_A$, where $\nabla$ is the Gauss-Manin connection. Then by the definition of $B_{[\omega]}$, we have
		\begin{align*}
			T^{1,0}_0B_{[\omega]}=&\{[\theta]\in H^{0,1}(X,T^{1,0}X)|[\theta\lrcorner\zeta]_A=0\in H_A^{0,2}\xc\}\\=& H^{0,1}(X,T^{1,0}X)_{[\omega]}.
		\end{align*}
	\end{proof}
		\begin{remark}
			If $\omega$ is moreover K\"ahler, $\alpha$ can be taken as $0$. Therefore everything here coincides with the case of K\"ahler polarised deformation.
		\end{remark}

	\section{Primitive classes}\label{secprim}
			\begin{landd}\label{def5}
				Let $X$ be a compact complex manifold of dimension $n$, and $\omega$ be an SKT metric on $X$. Then the map
				\begin{align*}
					L_{[\omega]}:H^{p,q}_{BC}(X,\cc)&\longrightarrow H^{p+1,q+1}_A(X,\cc)\\
					[\gamma]_{BC}&\longmapsto [\omega\wedge\gamma]_A
				\end{align*}
				is well-defined and only depends on the Aeppli class of $\omega$.
				
				
				We say that a Bott-Chern class $[\gamma]_{BC}$ of bidegree $(p,n-p)$ is \textit{primitive} (or $[\omega]_A$-primitive) if $L_{[\omega]}([\gamma]_{BC})=0$. We denote the space of primitive Bott-Chern classes of bidegree $(n-1,1)$ by
				$H^{n-1,1}_{BC,prim}(X,\cc)$.
				
			\end{landd}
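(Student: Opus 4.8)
The plan is to verify three claims in order: (i) that $\omega\wedge\gamma$ is $\ddb$-closed, so that $[\omega\wedge\gamma]_A$ is a legitimate Aeppli class; (ii) that this class is unchanged if $\gamma$ is replaced by another representative of $[\gamma]_{BC}$; and (iii) that it is unchanged if $\omega$ is replaced by another SKT metric with the same Aeppli class. Each claim reduces to a short Leibniz-rule computation, and the only structural inputs are the SKT identity $\ddb\omega=0$ (equivalently $\db\dd\omega=0$) together with the Bott-Chern closedness $\dd\gamma=\db\gamma=0$ of a representative $\gamma$ of $[\gamma]_{BC}$.

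For (i) I would first note that $\db(\omega\wedge\gamma)=\db\omega\wedge\gamma$, since $\db\gamma=0$ and $\omega$ has even degree, and then that $\dd(\db\omega\wedge\gamma)=\dd\db\omega\wedge\gamma-\db\omega\wedge\dd\gamma=0$, using $\ddb\omega=0$ and $\dd\gamma=0$. Hence $\ddb(\omega\wedge\gamma)=0$ and $[\omega\wedge\gamma]_A\in H^{p+1,q+1}_A(X,\cc)$ is defined. This is precisely the step where the SKT hypothesis is essential.

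For (ii), replacing $\gamma$ by $\gamma+\ddb\eta$ changes the product by $\omega\wedge\ddb\eta$, and the key point is the identity
\[\omega\wedge\ddb\eta=\dd(\omega\wedge\db\eta)+\db(\dd\omega\wedge\eta),\]
which I would obtain by expanding $\dd(\omega\wedge\db\eta)=\dd\omega\wedge\db\eta+\omega\wedge\ddb\eta$ and $\db(\dd\omega\wedge\eta)=-\dd\omega\wedge\db\eta$ (here $\db\dd\omega=-\dd\db\omega=0$) and adding. This exhibits $\omega\wedge\ddb\eta$ as the sum of a $\dd$-exact and a $\db$-exact form, hence as an element of $\text{Im}\,\dd+\text{Im}\,\db$, so the Aeppli class is indeed unchanged.

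For (iii), two SKT metrics with the same Aeppli class differ by $\omega'-\omega=\dd a+\db b$ with $a$ of type $(0,1)$ and $b$ of type $(1,0)$, and here I would use only the closedness of $\gamma$: since $\dd\gamma=0$ we have $\dd(a\wedge\gamma)=\dd a\wedge\gamma$, and since $\db\gamma=0$ we have $\db(b\wedge\gamma)=\db b\wedge\gamma$, so $(\dd a+\db b)\wedge\gamma\in\text{Im}\,\dd+\text{Im}\,\db$; moreover $\text{Im}\,\dd$ and $\text{Im}\,\db$ lie in $\ker\ddb$, which automatically makes the perturbed product $\omega'\wedge\gamma$ itself $\ddb$-closed. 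I do not anticipate a genuine obstacle: the entire content is in the cancellations forced by $\ddb\omega=0$ and $\dd\gamma=\db\gamma=0$, and the only care required is with the Leibniz signs, which are governed by the degrees $2$ of $\omega$, $3$ of $\dd\omega$ and $\db\omega$, and $1$ of $a$ and $b$.
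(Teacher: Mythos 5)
Your proof is correct and follows essentially the same route as the paper: the same three Leibniz-rule computations, including the key identity $\omega\wedge\ddb\eta=\dd(\omega\wedge\db\eta)+\db(\dd\omega\wedge\eta)$ for independence of the Bott-Chern representative and the splitting $(\dd a+\db b)\wedge\gamma=\dd(a\wedge\gamma)+\db(b\wedge\gamma)$ for independence of the Aeppli representative of $\omega$. No gaps; the signs and the use of $\ddb\omega=0$ are exactly as in the paper's argument.
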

			\begin{proof}
				Since $\partial\gamma=\db\gamma=0$, we get \[\ddb(\omega\wedge\gamma)=\ddb\omega\wedge\gamma=0,\] so $\omega\wedge\gamma$ represents an Aeppli class.
				By
				\begin{align*}
					\omega\wedge(\gamma+\ddb\beta)=&\omega\wedge\gamma+\partial(\omega\wedge\db\beta)+\db(\partial\omega\wedge\beta)+\ddb\omega\wedge\beta\\=&\omega\wedge\gamma+\partial(\omega\wedge\db\beta)+\db(\partial\omega\wedge\beta),
				\end{align*}
				we have $[\omega\wedge(\gamma+\ddb\beta)]_A=[\omega\wedge\gamma]_A$. Hence the map $L_{[\omega]}$ is well-defined.
				From
				 \[(\omega+\partial\beta_1+\db\beta_2)\wedge\gamma=\omega\wedge\gamma+\partial(\beta_1\wedge\gamma)+\db(\beta_2\wedge\gamma),\]
				we see that the map $L_{[\omega]}$ only depends on the Aeppli class of $\omega$.
			\end{proof}

		Let $X$ be a $\ddb$-manifold. We denote by \[j:H_A^{p,q}(X,\cc)\overset{\cong}{\longrightarrow}H^{p,q}_\db(X,\cc)\]
		the canonical isomorphism and by
		\begin{equation*}
			\widetilde{T_{[u]}}:H^{0,1}(X,T^{1,0}X)\xlongrightarrow[\cong]{T_{[u]}} H^{n-1,1}_\db(X,\cc)\xlongrightarrow[\cong]{i} H^{n-1,1}_{BC}(X,\cc)
		\end{equation*}
		the composition of canonical isomorphism $i$ and Calabi-Yau isomorphism.
		
		More precisely, for $[\theta]\in H^{0,1}(X,T^{1,0}X)$, we have $T_{[u]}([\theta])=[\theta\lrcorner u]\in\h[\db]{n-1,1}\xc$. Then the isomorphism $i$ maps $[\theta\lrcorner u]$ to $[\theta\lrcorner u+\db \eta]_{BC}$, where $\eta$ is a $(n-1,0)$-form such that $\partial(\theta\lrcorner u+\db \eta)=0$. Such a form $\eta$ exists because of the $d$-closedness, $\dd$-exactness of $\dd(\theta\lrcorner u)$ and the $\ddb$-property of $X$. The class $[\theta\lrcorner u+\db \eta]_{BC}$ is independent of the choice of $\eta$ such that $\partial(\theta\lrcorner u+\db \eta)=0$, again by the $\ddb$-property of $X$.
		
		\begin{lemma}\label{lemma8}
			The following map
			\begin{align*}
				f_{[u]}:H^{0,q}_\db(X,\cc)&\longrightarrow H^{n,q}_\db(X,\cc)\\
				[\xi]&\longmapsto[ u\wedge\xi]
			\end{align*}
			is well-defined and an isomorphism for all $q=1,\cdots,n$. 
			As a consequence, we have the equality between the Hodge numbers $h^{0,q}=h^{n,q}$.
		\end{lemma}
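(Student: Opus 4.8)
The plan is to realise $f_{[u]}$ as the map on cohomology induced by wedging with $u$, viewed as a morphism of Dolbeault complexes. Two properties of the holomorphic $n$-form $u$ fixed in the Preliminaries are decisive: $\db u=0$, which makes $u\wedge\cdot$ commute with $\db$ up to sign, and the fact that $u$ is nowhere vanishing, which makes $u\wedge\cdot$ bijective on forms. First I would verify well-definedness. Since $u$ is of degree $n$ with $\db u=0$, the Leibniz rule gives, for every $(0,q)$-form $\xi$,
\[\db(u\wedge\xi)=(-1)^n\,u\wedge\db\xi.\]
Thus $u\wedge\xi$ is $\db$-closed whenever $\xi$ is, and the identity $u\wedge\db\eta=(-1)^n\db(u\wedge\eta)$ shows that $\db$-exact forms are sent to $\db$-exact forms; hence $[u\wedge\xi]$ depends only on $[\xi]$.

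Next I would extract the structural content. Because $u$ trivialises the canonical bundle $K_X=\Lambda^{n,0}T^*X$, the canonical identification $\Lambda^{n,q}T^*X\cong\Lambda^{n,0}T^*X\otimes\Lambda^{0,q}T^*X$ makes $\xi\mapsto u\wedge\xi$ a fibrewise, hence global, $\cinf(X,\cc)$-linear bijection $\cinf_{0,q}(X,\cc)\to\cinf_{n,q}(X,\cc)$. Together with the intertwining $\db\circ(u\wedge\cdot)=(-1)^n(u\wedge\cdot)\circ\db$ read off from the displayed identity, this exhibits $u\wedge\cdot$ as an isomorphism of cochain complexes, up to the harmless global sign, so it induces an isomorphism on cohomology, which is precisely $f_{[u]}$.

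To make injectivity and surjectivity explicit rather than abstract: given a $\db$-closed $(n,q)$-form $\Phi$, bijectivity provides a unique $(0,q)$-form $\xi$ with $\Phi=u\wedge\xi$, and then $0=\db\Phi=(-1)^n u\wedge\db\xi$ together with injectivity of $u\wedge\cdot$ forces $\db\xi=0$, giving $f_{[u]}([\xi])=[\Phi]$. Conversely, if $\xi$ is $\db$-closed with $u\wedge\xi=\db\Psi$, then writing $\Psi=u\wedge\eta$ yields $u\wedge\xi=(-1)^n u\wedge\db\eta$, whence $\xi=(-1)^n\db\eta$ and $[\xi]=0$. The equality $h^{0,q}=h^{n,q}$ follows at once from the invariance of dimension under isomorphism. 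I do not anticipate a serious obstacle here; the point deserving genuine attention --- and the real crux of the lemma --- is that the nowhere-vanishing of $u$ promotes the chain map $u\wedge\cdot$ from merely injective to bijective on forms, which is exactly what turns it into a quasi-isomorphism. Keeping track of the sign $(-1)^n$ is the only other thing to watch.
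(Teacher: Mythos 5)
Your proof is correct and follows essentially the same route as the paper's: both rest on the two facts that $\db u=0$ makes $u\wedge\cdot$ a chain map (up to the sign $(-1)^n$) and that the nowhere-vanishing of $u$ makes $u\wedge\cdot$ bijective on smooth forms, hence a quasi-isomorphism; the paper verifies the bijectivity in local coordinates by dividing by the non-vanishing coefficient $g$ of $u$, while you phrase it invariantly via the trivialisation $\Lambda^{n,q}T^*X\cong\Lambda^{n,0}T^*X\otimes\Lambda^{0,q}T^*X$. If anything, your explicit injectivity step (writing $\Psi=u\wedge\eta$ and cancelling $u$) is stated more cleanly than in the paper, where the corresponding computation is carried out but labelled as part of surjectivity.
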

		\begin{proof}
			To check that this is an isomorphism, we first check that
			\begin{align*}
				f_u:C_{0,q}^\infty(X,\cc)&\longrightarrow C_{n,q}^\infty(X,\cc)\\
				\xi&\longmapsto u\wedge\xi
			\end{align*}
			is an isomorphism. In local coordinates, let 
			\begin{equation*}
				\xi=\sum_{\lvert J\rvert=q}\xi_{\bar{J}}d\zb_J\ \text{ and}\ \ u=gdz_1\wedge\cdots\wedge dz_n
			\end{equation*}
			where $g$ does not vanish. 
			\[
			u\wedge\xi=\sum_{\lvert J\rvert=q}g\xi_{\bar{J}}dz_1\wedge\cdots\wedge dz_n\wedge d\zb_J.
			\]
			Hence 
			\begin{align*}
				C_{0,q}^\infty(X,\cc)&\longrightarrow C_{n,q}^\infty(X,\cc)\\
				\xi&\longmapsto u\wedge\xi
			\end{align*}
			is an isomorphism because $g$ does not vanish.
			
			It is easy to check $f_u(\ker\db)= \ker\db$ and $f_u(\im\db)\subset \im\db$ since $\db u=0$, which means that $f_{[u]}$ is well-defined and injective. Now we already have $h^{n,q}\ge h^{0,q}$ for all $q=1,\cdots, n$ by injectivity. Take any $\db$-exact form
			$\db\eta\in C_{n,q}^\infty(X,\cc)$. In local coordinates, write
			\[
			\eta=\sum_{\lvert I\rvert=q-1}\eta_{\bar{I}}dz_1\wedge\cdots\wedge dz_n\wedge d\zb_I
			\]
			Let $\zeta=\sum_{\lvert I\rvert=q-1}\frac{\eta_{\bar{I}}}{g}d\zb_I$, it is easy to check that $u\wedge\db\zeta=\db \eta$ and this implies the surjectivity of $f_{[u]}$.
			
		\end{proof}
		\begin{theorem}\label{thm}
			If $\omega$ is an SKT metric on a Calabi-Yau $\ddb$-manifold $X$, then 
			\begin{equation*}
				\widetilde{T_{[u]}}:H^{0,1}(X,T^{1,0}X)_{[\omega]}\longrightarrow H^{n-1,1}_{BC,prim}(X,\cc)
			\end{equation*}
			is an isomorphism.
		\end{theorem}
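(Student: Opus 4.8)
The plan is as follows. Since $\widetilde{T_{[u]}}$ is already known to be an isomorphism from $H^{0,1}(X,T^{1,0}X)$ onto $H^{n-1,1}_{BC}\xc$, it suffices to show that it carries the subspace $H^{0,1}(X,T^{1,0}X)_{[\omega]}$ bijectively onto $H^{n-1,1}_{BC,prim}\xc$. Both subspaces are kernels: the domain is the kernel of $\phi\colon[\theta]\mapsto[\theta\lrcorner\zeta]_A\in\h[A]{0,2}\xc$ (well defined by Lemma \ref{lem3}, and $\theta\lrcorner\zeta$ is $\db$-closed hence $\ddb$-closed), while the target is the kernel of $L_{[\omega]}\colon[\gamma]_{BC}\mapsto[\omega\wedge\gamma]_A\in\h[A]{n,2}\xc$. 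So I would build a commutative square whose vertical arrows are $\phi$ and $L_{[\omega]}$, whose top arrow is $\widetilde{T_{[u]}}$, and whose bottom arrow is an \emph{injective} map $\psi\colon\h[A]{0,2}\xc\to\h[A]{n,2}\xc$. Once $L_{[\omega]}\circ\widetilde{T_{[u]}}=\pm\,\psi\circ\phi$ with $\psi$ injective, the two kernels correspond under $\widetilde{T_{[u]}}$ and the theorem follows.

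The heart of the argument is a pointwise identity. For the $\db$-closed representative $\zeta=\omega+\dd\alpha\in[\omega]_\db$ one has $\zeta\wedge u=0$ purely for bidegree reasons ($\zeta$ is of type $(1,1)$ and $u$ of type $(n,0)$, so their product would be of type $(n+1,1)$). Since the contraction $\iota_\theta=\theta\lrcorner(\cdot)$ by a $T^{1,0}X$-valued $(0,1)$-form is an even derivation of the exterior algebra, applying it to $\zeta\wedge u=0$ yields $(\theta\lrcorner\zeta)\wedge u+\zeta\wedge(\theta\lrcorner u)=0$, that is,
\[\zeta\wedge(\theta\lrcorner u)=-\,u\wedge(\theta\lrcorner\zeta),\]
where I have used that $\theta\lrcorner\zeta$ has even total degree to commute it past $u$. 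This is the exact bridge between the two sides.

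Next I would upgrade wedging with $u$ to Aeppli cohomology: the map $\psi\colon[\beta]_A\mapsto[u\wedge\beta]_A$ is well defined and an isomorphism $\h[A]{0,2}\xc\to\h[A]{n,2}\xc$, the Aeppli counterpart of Lemma \ref{lemma8}. Because $u$ is $d$-closed, wedging with $u$ sends $\ker\ddb$ into $\ker\ddb$ and $\im\dd+\im\db$ into itself; choosing $\db$-closed Aeppli representatives (which exist by the $\ddb$-property) one checks that $\psi$ corresponds to $f_{[u]}$ under the canonical isomorphisms $\h[A]{0,2}\xc\cong\h[\db]{0,2}\xc$ and $\h[A]{n,2}\xc\cong\h[\db]{n,2}\xc$, so it is an isomorphism by Lemma \ref{lemma8} (injectivity is all that is needed). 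Now let $\gamma=\theta\lrcorner u+\db\eta$ be the Bott--Chern representative defining $\widetilde{T_{[u]}}([\theta])$. Using that $L_{[\omega]}$ depends only on $[\omega]_A=[\zeta]_A$ (Lemma and Definition \ref{def5}) I may replace $\omega$ by $\zeta$, and since $\db\zeta=0$ the correction term satisfies $\zeta\wedge\db\eta=\db(\zeta\wedge\eta)\in\im\db$; combining this with the pointwise identity gives
\[L_{[\omega]}\big(\widetilde{T_{[u]}}([\theta])\big)=[\zeta\wedge\gamma]_A=[\zeta\wedge(\theta\lrcorner u)]_A=-[u\wedge(\theta\lrcorner\zeta)]_A=-\,\psi\big(\phi([\theta])\big).\]

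With the square in hand the conclusion is immediate: $\widetilde{T_{[u]}}([\theta])$ is primitive iff $L_{[\omega]}(\widetilde{T_{[u]}}([\theta]))=0$ iff $\psi(\phi([\theta]))=0$ iff $\phi([\theta])=0$ (by injectivity of $\psi$) iff $[\theta]\in H^{0,1}(X,T^{1,0}X)_{[\omega]}$; hence the already-bijective $\widetilde{T_{[u]}}$ restricts to the desired isomorphism. I expect the main obstacle to be the bookkeeping that makes the square commute at the level of Aeppli classes rather than just pointwise --- specifically, proving that $\psi$ is well defined and injective on Aeppli cohomology and controlling the auxiliary term $\db\eta$ in the Bott--Chern representative, so that every discrepancy between $\omega\wedge\gamma$ and $-u\wedge(\theta\lrcorner\zeta)$ lands in $\im\dd+\im\db$. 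The pointwise derivation identity itself is the clean part.
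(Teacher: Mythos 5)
Your proposal is correct and follows essentially the same route as the paper: both proofs hinge on the contraction identity obtained by applying $\theta\lrcorner(\cdot)$ to a vanishing form of bidegree $(n+1,1)$, identify $L_{[\omega]}\circ\widetilde{T_{[u]}}$ (up to sign) with wedging by $u$ composed with $[\theta]\mapsto[\theta\lrcorner\zeta]$, and conclude from the injectivity in Lemma \ref{lemma8} together with the bijectivity of $\widetilde{T_{[u]}}$ on the full spaces. The only difference is organizational: where the paper expands $-(\theta\lrcorner\omega)\wedge u$ using $\zeta-\omega=\dd\beta_1+\db\beta_2$ and kills the correction term through a chain of integrations by parts, you invoke the representative-independence from Lemma and Definition \ref{def5} to replace $\omega$ by the $\db$-closed representative $\zeta$ at the outset, so that the correction collapses to the one-line observation $\zeta\wedge\db\eta=\db(\zeta\wedge\eta)$ --- a clean streamlining of the same argument.
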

		\begin{proof}
			For $[\theta]\in H^{0,1}(X,T^{1,0}X)$, we have  $\widetilde{T_{[u]}}([\theta])=[\theta\lrcorner u+\db \eta]_{BC}$, where $\eta$ is some $(n-1,0)$-form such that $\partial(\theta\lrcorner u+\db \eta)=0$. 
			
			Let $\zeta$ be an arbitrary representative in $[\omega]_\db$. Then there exist $\beta_1\in\cinf_{0,1}\xc$ and $\beta_2\in\cinf_{1,0}\xc$, such that $\zeta-\omega=\dd\beta_1+\db\beta_2$.
			By $$0=\theta\lrcorner(\omega\wedge u)=(\theta\lrcorner\omega)\wedge u+\omega\wedge(\theta\lrcorner u),$$ we get 
			\begin{align*}
				L_{[\omega]}(\widetilde{T_{[u]}}([\theta]))=&[\omega\wedge(\theta\lrcorner u+\db \eta)]_A\\
				=&[-(\theta\lrcorner\omega)\wedge u+\omega\wedge\db\eta]_A\\
				=&[(\theta\lrcorner(\zeta-\omega))\wedge u+\omega\wedge\db\eta]_A-[(\theta\lrcorner\zeta)\wedge u]_A.
			\end{align*}
			Moreover,
			\begin{align*}
				[(\theta\lrcorner(\zeta-\omega))\wedge u+\omega\wedge\db\eta]_A
				=&[-(\zeta-\omega)\wedge(\theta\lrcorner u)+\omega\wedge\db\eta]_A\\
				=&[-(\dd\beta_1+\db\beta_2)\wedge(\theta\lrcorner u)+\omega\wedge\db\eta]_A\\
				=&[\beta_1\wedge\dd(\theta\lrcorner u)+\omega\wedge\db\eta]_A\\
				=&[-\beta_1\wedge\ddb\eta+\omega\wedge\db\eta]_A\\
				=&[(\omega+\dd\beta_1+\db\beta_2)\wedge\db\eta]_A\\
				=&[\zeta\wedge\db\eta]_A\\
				=&0.
			\end{align*}
			Hence we have
			\begin{align*}
				(j\circ L_{[\omega]})(\widetilde{T_{[u]}}([\theta]))&=j([\omega\wedge(\theta\lrcorner u+\db \eta)]_A)\\&=[-(\theta\lrcorner\zeta)\wedge u]_\db.
			\end{align*} By Lemma \ref{lemma8}, we have that $(j\circ L_{[\omega]})(\widetilde{T_{[u]}}([\theta]))=0$ if and only if $[\theta]\in H^{0,1}(X,T^{1,0}X)_{[\omega]}$, i.e. $\widetilde{T_{[u]}}$ is an isomorphism.
		\end{proof}
		\begin{corollary}
			
				The map
			\begin{align*}
				L_{[\omega]}:H^{n-1,1}_{BC}(X,\cc)&\longrightarrow H^{n,2}_A(X,\cc)\\
				[\gamma]_{BC}&\longmapsto [\omega\wedge\gamma]_A
			\end{align*}
			is surjective.
		\end{corollary}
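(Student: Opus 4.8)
The plan is to convert the asserted surjectivity into an injectivity statement that can be attacked by Hodge-theoretic means. First I would reuse the computation already carried out in the proof of Theorem \ref{thm}: for \emph{every} $[\theta]\in H^{0,1}(X,T^{1,0}X)$ one has the identity
\[(j\circ L_{[\omega]})(\widetilde{T_{[u]}}([\theta]))=[-(\theta\lrcorner\zeta)\wedge u]_\db=-f_{[u]}([\theta\lrcorner\zeta]_\db).\]
Since $\widetilde{T_{[u]}}$ is an isomorphism onto $\h[BC]{n-1,1}\xc$, since $j$ is an isomorphism, and since $f_{[u]}$ is an isomorphism by Lemma \ref{lemma8}, the image of $L_{[\omega]}$ corresponds under these three isomorphisms to the image of the map $g\colon H^{0,1}(X,T^{1,0}X)\to\h[\db]{0,2}\xc$, $[\theta]\mapsto[\theta\lrcorner\zeta]_\db$. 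Hence $L_{[\omega]}$ is surjective if and only if $g$ is surjective, i.e. if and only if $L_{[\omega]}$ hits all of $\h[A]{n,2}\xc$.

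Next I would dualise. The Bott-Chern and Aeppli cohomologies are Serre-dual, with perfect pairing $\h[A]{p,q}\xc\times\h[BC]{n-p,n-q}\xc\to\cc$ given by $([\sigma]_A,[\gamma]_{BC})\mapsto\int_X\sigma\wedge\gamma$. From $\int_X(\omega\wedge\gamma)\wedge\rho=\int_X\gamma\wedge(\omega\wedge\rho)$ one reads off that the transpose of $L_{[\omega]}\colon\h[BC]{n-1,1}\xc\to\h[A]{n,2}\xc$ is again a map of the same shape, namely $L_{[\omega]}\colon\h[BC]{0,n-2}\xc\to\h[A]{1,n-1}\xc$, $[\rho]_{BC}\mapsto[\omega\wedge\rho]_A$. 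Therefore the surjectivity we want is equivalent to the injectivity of this transpose, i.e. to the vanishing $H^{0,n-2}_{BC,prim}(X,\cc)=0$: there should be no nonzero $[\omega]_A$-primitive Bott-Chern class of bidegree $(0,n-2)$. The pointwise picture is favourable here: a form of bidegree $(0,n-2)$ is automatically primitive (it is killed by the adjoint Lefschetz operator for degree reasons), so formula (\ref{voiprim}) gives, up to a nonzero constant, $\omega^{2}\wedge\rho\wedge\bar\rho=c\,|\rho|^{2}\,dV$ with $c\neq0$; consequently the bundle map $\rho\mapsto\omega\wedge\rho$, $\Lambda^{0,n-2}\to\Lambda^{1,n-1}$, is fibrewise injective because $\omega$ is nondegenerate and $(0,n-2)$ lies below the middle degree.

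The remaining and, I expect, decisive step is to upgrade this fibrewise injectivity to the cohomological statement $H^{0,n-2}_{BC,prim}(X,\cc)=0$. Here I would take $\rho$ to be $\Delta_{BC}$-harmonic, replace $\omega$ by the $d$-closed representative $\tilde\omega=\omega+\dd\alpha+\db\bar\alpha$ of $[\omega]_A$ so that $L_{[\omega]}[\rho]_{BC}=[\tilde\omega\wedge\rho]_A$ with $\tilde\omega\wedge\rho$ being $d$-closed, and then use that a $d$-closed Aeppli-exact form is $\ddb$-exact by the $\ddb$-property; writing $\tilde\omega\wedge\rho=\ddb c$ and pairing against $\bar\rho$, one integrates by parts invoking $\ddb\omega=0$ together with $\dd\rho=\db\rho=0$. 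The hard part is exactly this positivity/integration-by-parts analysis: because $X$ is only SKT and not K\"ahler, the terms $\dd\omega$ and $\db\omega$ do not vanish, and through $\db\omega=\ddb\alpha$ and $\dd\omega=-\ddb\bar\alpha$ they feed correction terms into the would-be Hodge-Riemann identity $\|\rho\|^{2}=c\int_X\omega^{2}\wedge\rho\wedge\bar\rho$, so naive positivity does not close the argument. Controlling these correction terms — most plausibly by exploiting the harmonicity characterisations (\ref{kerBC}) and (\ref{kerA}) in combination with the $\ddb$-property, rather than by positivity alone — is where the genuine work lies, and it is the step I anticipate as the main obstacle.
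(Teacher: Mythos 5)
Your first step is correct and is essentially the paper's own reduction in disguise: from the identity $(j\circ L_{[\omega]})\circ\widetilde{T_{[u]}}=-f_{[u]}\circ g$, where $g([\theta])=[\theta\lrcorner\zeta]_\db$, which is exactly what the proof of Theorem \ref{thm} establishes, and from the fact that $\widetilde{T_{[u]}}$, $j$ and $f_{[u]}$ (Lemma \ref{lemma8}) are isomorphisms, the corollary is equivalent to the surjectivity of $g:H^{0,1}(X,T^{1,0}X)\rightarrow \h[\db]{0,2}\xc$. The paper packages the same content as a dimension count ($\dim\ker L_{[\omega]}=\dim\h[BC,prim]{n-1,1}\xc=h^{n-1,1}-h^{0,2}$ by Theorem \ref{thm}, hence $\dim\mathrm{Im}\,L_{[\omega]}=h^{0,2}=\dim\h[A]{n,2}\xc$), and then settles the surjectivity of $g$ directly and pointwise: writing the contraction $\theta\mapsto\theta\lrcorner\omega$ in local coordinates, surjectivity is deduced from the invertibility of the matrix $(\omega_{j\bar{k}})$, an argument which uses only the nondegeneracy of the metric and nowhere needs $d\omega=0$.

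Where your proposal genuinely breaks down is after the dualisation. The Serre-duality step is a correct equivalence (the Bott-Chern/Aeppli pairing is perfect, and the transpose of $L_{[\omega]}$ is again $L_{[\omega]}$ in bidegree $(0,n-2)$), but it only replaces the corollary by a statement that is, by that very duality, exactly as strong: the injectivity of $L_{[\omega]}$ on $H^{0,n-2}_{BC}(X,\cc)$. You then do not prove this statement. The route you sketch --- pointwise primitivity of $(0,n-2)$-forms plus the Hodge-Riemann-type identity $\|\rho\|^{2}=c\int_X\omega^{2}\wedge\rho\wedge\bar{\rho}$ upgraded by integration by parts --- fails for the reason you yourself identify: since $X$ is only SKT, $\dd\omega=-\ddb\bar{\alpha}$ and $\db\omega=\ddb\alpha$ are nonzero, the integration by parts against a $\Delta_{BC}$-harmonic representative produces correction terms, and you give no mechanism for absorbing them beyond the hope that (\ref{kerBC}) and (\ref{kerA}) might help. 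So the proposal consists of two valid reformulations followed by an acknowledged gap at the decisive step; as written it does not prove the corollary. Note also that the dualisation actively leads you away from the available finish: after your first reduction you needed precisely the surjectivity of $g$, which is the statement the paper proves by the fibrewise invertibility of the metric, whereas the dual injectivity statement invites the positivity analysis that the SKT (non-K\"ahler) hypothesis defeats.
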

			\begin{proof}
				For $\ddb$-manifolds, we know the equality between the dimensions of corresponding Dolbeaut, Bott-Chern, Aeppli cohomology classes $h^{p,q}:=h^{p,q}_\db=h^{p,q}_{BC}=h^{p,q}_A$.
				Therefore we have 
			\begin{align*}
				\dim_\cc H^{0,1}(X,T^{1,0}X)_{[\omega]}&=\dim_\cc H^{0,1}(X,T^{1,0}X)-\dim_\cc \h[]{0,2}\xc\\&=h^{n-1,1}-h^{0,2}
			\end{align*}
			by the surjectivity of 
			\begin{align*}
				H^{0,1}(X,T^{1,0}X)&\rightarrow\h[\db]{0,2}\xc\\
				[\theta]&\mapsto[\theta\lrcorner\omega].
			\end{align*}
		
			To prove this surjectivity, we write the map in local coordinates $(z_1,\cdots,z_n)$. The metric can be written as $\omega=\sum_{j,k=1}^{n}i\omega_{j\bar{k}}dz_i\wedge d\zb_k$.
			Then the map is $$[\theta=\sum_{i,j=1}^{n}\theta_j^id\zb_i\otimes\frac{\dd}{\dd z_j}]\mapsto[\theta\lrcorner\omega=\sum_{i,j,k=1}^{n}i\theta_j^i\omega_{j\bar{k}}d\zb_i\wedge d\zb_k].$$
			The surjectivity is derived from the invertibility of $(\omega_{j\bar{k}})_{1\le j,k\le n}$.
			 
			By Theorem \ref{thm}, we get
			\[\dim_\cc\ker L_{[\omega]}=\dim_\cc\h[BC,prim]{n-1,1}\xc=\dim_\cc H^{0,1}(X,T^{1,0}X)_{[\omega]}=h^{n-1,1}-h^{0,2}.
			\]
			Hence we have$$\dim_\cc \text{Im}L_{[\omega]}=\dim_\cc\h[BC]{n-1,1}\xc-\dim_\cc\ker L_{[\omega]}=h^{0,2}=\dim_\cc H^{n,2}_A(X,\cc),$$ which means that $L_{[\omega]}|_{\h[BC]{n-1,1}\xc}$ is surjective.
			\end{proof}

		In the K\"ahler case, every primitive Dolbeaut class has one and only one $d$-closed primitive representative, which is the $\Delta''$-harmonic element. In general, we have the following:
		
		\begin{lemma}
			For a primitive form $v$ of degree $n$, the following are equivalent: 
		\begin{enumerate}[\rm(a)]
			\item $d$-closed,
			\item $d^*$-closed,
			\item $\Delta v=0$,
			\item $\Delta_Av=0$,
			\item $\Delta_{BC}v=0$.
		\end{enumerate}
		\end{lemma}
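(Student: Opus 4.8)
The plan is to isolate the one genuinely nontrivial equivalence, (a) $\Leftrightarrow$ (b), where primitivity and the middle degree are essential, and then to read off (c), (d), (e) from compact Hodge theory together with the kernel descriptions (\ref{kerBC}) and (\ref{kerA}). Throughout I take $v$ of pure bidegree $(p,q)$ with $p+q=n$ (the setting of primitive Dolbeault classes, and the case needed for the $(n-1,1)$-forms of Theorem \ref{thm}). The crucial input is formula (\ref{voiprim}): since $p+q=n$ the power $\omega^{n-p-q}=\omega^0=1$, so
\begin{equation*}
	\star v=(-1)^{\frac{n(n+1)}{2}}i^{p-q}\,v=:\lambda v,\qquad \lambda\neq0,
\end{equation*}
that is, the Hodge star acts as a nonzero scalar on primitive forms of middle degree. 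This single identity is what drives the whole argument.

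The crux is (a) $\Leftrightarrow$ (b). Using the standard relation $d^*=-\star d\star$ (on the compact manifold $X$, the overall sign being irrelevant) together with $\star v=\lambda v$, I compute
\begin{equation*}
	d^*v=-\star d\star v=-\lambda\,\star(dv).
\end{equation*}
Since $\lambda\neq0$ and $\star$ is an isomorphism, this yields $d^*v=0\Leftrightarrow dv=0$. This is the only place where the hypotheses are truly used: for a mixed-bidegree $n$-form the scalars $i^{p-q}$ differ across bidegrees and $d$- and $d^*$-closedness decouple, so the statement genuinely needs $v$ to be primitive of pure middle bidegree. Granting (a) $\Leftrightarrow$ (b), statement (c) is immediate, because on $X$ compact one has $\langle\Delta v,v\rangle=\|dv\|^2+\|d^*v\|^2$, whence $\Delta v=0$ is equivalent to $dv=0$ and $d^*v=0$, i.e. to either one alone. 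Thus (a), (b), (c) are all equivalent.

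Finally I would bring in $\Delta_{BC}$ and $\Delta_A$ via (\ref{kerBC}) and (\ref{kerA}). As $v$ is of pure bidegree $(p,q)$, the condition $dv=0$ splits as $\dd v=0$ and $\db v=0$, while $d^*v=0$ splits as $\dd^*v=0$ and $\db^*v=0$. If (a) (equivalently (b)) holds, all four of $\dd v,\db v,\dd^*v,\db^*v$ vanish; hence $(\ddb)^*v=\db^*\dd^*v=0$ and $\ddb v=\dd\db v=0$, so (\ref{kerBC}) and (\ref{kerA}) give $\Delta_{BC}v=0$ and $\Delta_Av=0$, namely (e) and (d). Conversely, (\ref{kerBC}) forces $\dd v=\db v=0$, hence $dv=0$, so (e) $\Rightarrow$ (a); and (\ref{kerA}) forces $\dd^*v=\db^*v=0$, hence $d^*v=0$, so (d) $\Rightarrow$ (b). This closes the chain of equivalences. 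The main obstacle is precisely (a) $\Leftrightarrow$ (b); once the scalar action of $\star$ is in hand, the remaining implications are bookkeeping with the bidegree decompositions of $d,d^*$ and the explicit kernels of the two fourth-order Laplacians.
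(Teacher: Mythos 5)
Your proof is correct and takes essentially the same route as the paper's: both rest on formula (\ref{voiprim}) giving $\star v=\lambda v$ with $\lambda\neq 0$ in middle degree, the $\star$-conjugation formula for the adjoint, and the kernel identities (\ref{kerBC}) and (\ref{kerA}). The only cosmetic difference is that the paper uses $\dd^*=-\star\db\star$ and $\db^*=-\star\dd\star$ to get the cross-equivalences $\dd v=0\Leftrightarrow\db^*v=0$ and $\db v=0\Leftrightarrow\dd^*v=0$ directly, whereas you run the argument through $d^*=-\star d\star$ and then split by bidegree, using the factorisations $\ddb v=\dd\db v$ and $(\ddb)^*v=\db^*\dd^*v$; the mathematical content is identical.
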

		\begin{proof}
			Recall that
			\begin{equation*}
				\dd^*=-\star\db\star,  \db^*=-\star\dd\star,  d^*=-\star d\star.
			\end{equation*}
			Therefore by formula (\ref{voiprim}), $v$ and $\star v$ are proportional. Hence, (a) and (b) are equivalent; $\dd v=0$ if and only if $\db^*v=0$; $\dd^* v=0$ if and only if $\db v=0$. Thus by equations (\ref{kerBC}) and (\ref{kerA}), we have that (c), (d) and (e) are equivalent. The equivalence between (a) and (c) is trivial now.
		\end{proof}
		
		In the SKT case, there is at most one $d$-closed primitive form in a cohomology class of degree $n$. If we look for a $d$-closed primitive form in a primitive class on an SKT $\ddb$-manifold, it does not matter whether we search in a Dolbeaut or Bott-Chern class.\smallskip

		We analyse an example of SKT manifold given in \cite{tardini2017geometric} from one point of view.
		Let $X=S^3\times S^3$. Recall that the 3-sphere is diffeomorphic to the special unitary group $SU(2)$. We know that $\mathfrak{su}(2)$,  the Lie algebra of $SU(2)$, has a basis $\{e_1,e_2,e_3\}$ with the following relations:
		\[[e_1.e_2]=2e_3,\quad[e_1,e_3]=-2e_2,\quad[e_2,e_3]=2e_1.\]
		Then by the Cartan formula, we have the following for the dual co-frame $\{e^1,e^2,e^3\}$:
		\[de^1=-2e^2\wedge e^3, \quad de^2=2e^1\wedge e^3 \quad de^3=-2e^1\wedge e^2.\]
		On $X$, we take $\{e_1,e_2,e_3\}$ and $\{f_1,f_2,f_3\}$ to be two copies of this basis of $\mathfrak{su}(2)$, and the corresponding co-frames $\{e^1,e^2,e^3\}$ and $\{f^1,f^2,f^3\}$. 
		Then we define a complex structure on $X$, namely the \textit{Calabi-Eckmann complex structure} by:
		\[Je_1=e_2,\quad Jf_1=f_2, \quad Je_3=f_3.\]
		We have
		\[Je^1=-e^2,\quad Jf^1=-f^2, \quad Je^3=-f^3.\]
		For the complex co-frame of $(1,0)$-forms, we set
		\[\varphi^1=e^1+ie^2,\quad \varphi^2=f^1+if^2, \quad \varphi^3=e^3+if^3.\]
		Thus, we have
		\begin{align*}
			d\varphi^1&=i\pq\wedge\pe+i\pq\wedge\bar{\varphi}^3,\\
			d\pw&=\pw\wedge\pe-\pw\wedge\peb,\\
			d\pe&=-i\pq\wedge\pqb+\pw\wedge\pwb.
		\end{align*}
		Equivalently,
		\begin{align*}
			\dd\varphi^1&=i\pq\wedge\pe,\\
			\dd\pw&=\pw\wedge\pe,\\
			\dd\pe&=0,\\
			\db\pq&=i\pq\wedge\bar{\varphi}^3,\\
			\db\pw&=-\pw\wedge\peb,\\
			\db\pe&=-i\pq\wedge\pqb+\pw\wedge\pwb.
		\end{align*}
		We define a Hermitian metric
		\[\omega:=\frac{i}{2}\sum_{j=1}^{3}\varphi^j\wedge\bar{\varphi}^j.\]
		By direct calculation we know that $\ddb\omega=0$, which means that $\omega$ is an SKT metric on $X$.
		We calculate the Bott-Chern cohomology groups (see \cite{tardini2017geometric}):
		\begin{align*}
			\h[BC]{0,0}\xc&=\langle[1]\rangle,\\
			\h[BC]{1,1}\xc&=\langle[\varphi^{1\bar{1}}],[\pp{2}{2}]\rangle,\\
			\h[BC]{2,1}\xc&=\langle[\pp{23}{2}+i\pp{13}{1}]\rangle,\\
			\h[BC]{1,2}\xc&=\langle[\pp{2}{23}-i\pp{1}{13}]\rangle,\\
			\h[BC]{2,2}\xc&=\langle[\pp{12}{12}]\rangle,\\
			\h[BC]{3,2}\xc&=\langle[\pp{123}{12}]\rangle,\\
			\h[BC]{2,3}\xc&=\langle[\pp{12}{124}]\rangle,\\
			\h[BC]{3,3}\xc&=\langle[\pp{123}{123}]\rangle,
		\end{align*}
		where all the representatives above are the $\Delta_{BC}$-harmonic ones. The other Bott-Chern cohomology groups are trivial.
		
		Note that 
		\[[\omega\wedge(\pp{23}{2}+i\pp{13}{1})]_A=\dfrac{-1+i}{2}[\pp{123}{12}]_A=\dfrac{-1+i}{2}[\dd\pp{13}{13}]_A=0.\]
		Hence $[\pp{23}{2}+i\pp{13}{1}]_{BC}\in\h[BC]{2,1}\xc$ is a primitive class but has no primitive representative. By conjugation, $[\pp{2}{23}-i\pp{1}{13}]_{BC}\in\h[BC]{1,2}\xc$ is again a primitive class but has no primitive representative.
		
	\section{Period map}
		In this section, we recall the definition of the period map and the local Torelli theorem. Tis section closely follows \cite{popovici2019holomorphic}.\\
		
		Fix a Hermitian metric $\omega$ on X. We have the Hodge star operator
		\[\star:\cinf_n(X,\cc)\longrightarrow\cinf_n(X,\cc)
		\]
		and $\star^2=(-1)^nid$, where $n=\dim_\cc X$. When $n$ is even , the eigenvalues of $\star$ is $1$ and $-1$.  When $n$ is odd, the eigenvalues of $\star$ is $i$ and $-i$. This induces a decomposition
		\[\cinf_n(X,\cc)=\Lambda^n_+\oplus\Lambda^n_-,
		\]
		where $\Lambda^n_+$ (resp. $\Lambda^n_-$) is the eigenspace corresponding to $1$ or $i$ (resp. $-1$ or $-i$).
		
		Since $\Delta=dd^*+d^*d$ commutes with $\star$, we have $\star(\mathcal{H}^n_\Delta\xc)=\mathcal{H}^n_\Delta\xc$. By $\mathcal{H}^n_\Delta\xc=\h{n}(X,\cc)$, we get a decomposition
		\[\h{n}(X,\cc)=\h[+]{n}(X,\cc)\oplus\h[-]{n}\xc.\]
		The Hodge-Riemann bilinear form can be defined on $\h{n}\xc$ without any assumption on $\omega$:
		\begin{align*}
			Q:\h{n}\xc\times\h{n}\xc&\longrightarrow\cc\\
		(\{\alpha\},\{\beta\})&\longmapsto(-1)^{\frac{n(n-1)}{2}}\int_X\alpha\wedge\beta.
		\end{align*}
		It is non-degenerate. Indeed for every class $\{\alpha\}$, if $\alpha$ is the $\Delta$-harmonic representative, then $\star\bar{\alpha}$ is also $\Delta$-harmonic. We have
		\[(-1)^{\frac{n(n-1)}{2}}Q(\{\alpha\},\{\star\bar{\alpha}\})=\int_X\alpha\wedge\star\bar{\alpha}=\lVert\alpha\rVert^2,\]
		which is not $0$ as long as $\alpha$ is not $ 0$.
		Therefore we define a non-degenerate sesquilinear form
		\begin{align*}
			H:\h{n}\xc\times\h{n}\xc&\longrightarrow\cc\\
			(\{\alpha\},\{\beta\})&\longmapsto(-1)^{\frac{n(n+1)}{2}}i^n\int_X\alpha\wedge\bar{\beta}.
		\end{align*}
		Then one can define the period domain as follows.
		\begin{definition}
			The period domain is defined as a subset of $\mathbb{P}\h[]{n}\xc$:
			\begin{itemize}
				\item If $n$ is even,
			\[D=\{\text{complex line } l\in\mathbb{P}\h[]{n}\xc\arrowvert\,\forall\varphi\in l\setminus\{0\},Q(\varphi,\varphi)=0\text{ and }H(\varphi ,\varphi)>0\};\]
			\item If $n$ is odd,
				\[D=\{\text{complex line } l\in\mathbb{P}\h[]{n}\xc\arrowvert\,\forall\varphi\in l\setminus\{0\},Q(\varphi,\varphi)=0\text{ and }H(\varphi ,\varphi)<0\}.\]
			\end{itemize}
		\end{definition}
		We prove that $\h[]{n,0}\xc$ is a subset of the period domain by the following two lemmas:
		\begin{lemma}
			Let $X$ be a compact complex $\ddb$-manifold, then
		\begin{itemize}
			\item if $n$ is even, $\h[]{n,0}\xc\subset\h[+]{n}\xc$;
			\item	if $n$ is odd, $\h[]{n,0}\xc\subset\h[-]{n}\xc$.
		\end{itemize}
		\end{lemma}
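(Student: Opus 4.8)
The plan is to represent a class in $H^{n,0}(X,\mathbb{C})$ by a holomorphic $n$-form $\alpha$ and to show directly that its de Rham class lies in the prescribed eigenspace of $\star$. First I would note that any $(n,0)$-form is automatically $d$-closed: $\partial\alpha$ has bidegree $(n+1,0)=0$ and $\bar\partial\alpha=0$ by holomorphy, so $d\alpha=0$ and $\{\alpha\}$ is a well-defined de Rham class. This is precisely the inclusion $H^{n,0}\hookrightarrow H^n$ coming from the Hodge decomposition on a $\ddb$-manifold, so the statement is really about the image of such an $\alpha$.

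The key computation is the value of $\star\alpha$. Since $\alpha$ has bidegree $(n,0)$ it is primitive, so formula (\ref{voiprim}) applies with $p=n$, $q=0$; here $n-p-q=0$, whence both the wedge factor $\omega^{n-p-q}$ and the factorial reduce to $1$, giving $\star\alpha=(-1)^{\frac{n(n+1)}{2}}i^{n}\alpha$. Thus $\alpha$ is an eigenvector of $\star$ with eigenvalue $c:=(-1)^{\frac{n(n+1)}{2}}i^{n}$. I would then separate parities: writing $n=2m$ yields $c=(-1)^{m}\cdot(-1)^{m}=1$, while $n=2m+1$ yields $c=(-1)^{m+1}\cdot(-1)^{m}i=-i$. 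So $c=1$ when $n$ is even and $c=-i$ when $n$ is odd, which are exactly the eigenvalues labelling $\Lambda^n_+$ and $\Lambda^n_-$ respectively.

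It then remains to verify that $\{\alpha\}$ genuinely sits in the eigenspace $H^n_\pm(X,\mathbb{C})$, which is cut out using $\Delta$-harmonic representatives. Since $\alpha$ is a $d$-closed primitive $n$-form, the preceding lemma already gives $\Delta\alpha=0$; equivalently, $d^*\alpha=-\star d\star\alpha=-c\,\star d\alpha=0$ follows at once from $\star\alpha=c\alpha$ together with $d^*=-\star d\star$. Hence $\alpha$ is the $\Delta$-harmonic representative of $\{\alpha\}$, the induced action of $\star$ on $\{\alpha\}$ has eigenvalue $c$, and combining this with the parity computation yields $\{\alpha\}\in H^n_+$ for $n$ even and $\{\alpha\}\in H^n_-$ for $n$ odd, as claimed.

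The main obstacle to anticipate is the step asserting that the holomorphic $n$-form is $\Delta$-harmonic on a merely $\ddb$ (non-K\"ahler) manifold, where harmonicity of holomorphic forms is not automatic. This is resolved entirely by the self-duality $\star\alpha=c\alpha$, which forces $d^*$-closedness and thereby pins $\alpha$ down as the de Rham harmonic representative of its class. The rest is purely the bookkeeping of signs and powers of $i$ across the two parity cases.
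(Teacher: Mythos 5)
Your proof is correct and follows essentially the same route as the paper: primitivity of $(n,0)$-forms for bidegree reasons, formula (\ref{voiprim}) giving $\star\alpha=(-1)^{\frac{n(n+1)}{2}}i^{n}\alpha=i^{n(n+2)}\alpha$, and the parity computation of the eigenvalue. Your explicit verification that $\alpha$ is $\Delta$-harmonic (via $d^{*}\alpha=-c\star d\alpha=0$, or via the paper's earlier lemma on primitive $n$-forms) is a detail the paper's proof leaves implicit but needs, since $H^{n}_{\pm}(X,\mathbb{C})$ is defined through the action of $\star$ on harmonic representatives.
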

	\begin{proof}
		Because every element $\alpha\in\cinf_{n,0}\xc$ is primitive for bidegree reasons, we have $\star\alpha=i^{n(n+2)}\alpha$ by (\ref{voiprim}). Hence $\alpha$ is $\db$-closed if and only if $\star\alpha$ is $\db$-closed.
		We have $i^{n(n+2)}=1$ if $n$ is even, and $i^{n(n+2)}=-i$ if $n$ is odd. This proves the lemma.
	\end{proof}
		\begin{lemma}We have the following properties:\\
			$H(\{\alpha\},\{\alpha\})>0$ for every class $\{\alpha\}\in\h[+]{n}\xc\setminus\{0\}$,\\
			$H(\{\alpha\},\{\alpha\})<0$ for every class $\{\alpha\}\in\h[-]{n}\xc\setminus\{0\}$.
		\end{lemma}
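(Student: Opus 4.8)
The plan is to evaluate $H$ on the unique $\Delta$-harmonic representative of each class, using that the splitting $\h{n}\xc=\h[+]{n}\xc\oplus\h[-]{n}\xc$ is exactly the one induced by the $\star$-eigenspace decomposition of $\mathcal{H}^n_\Delta\xc$. So I would fix a nonzero class, let $\alpha$ be its harmonic representative, and note that $H(\{\alpha\},\{\alpha\})$ may be computed with this closed representative since $\int_X\alpha\wedge\bar\alpha$ depends only on the class. By hypothesis $\alpha$ lies in $\Lambda^n_+$ or $\Lambda^n_-$, i.e. $\star\alpha=\lambda\alpha$ with $\lambda\in\{1,-1\}$ when $n$ is even and $\lambda\in\{i,-i\}$ when $n$ is odd. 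The single ingredient I will lean on is the identity $\int_X\alpha\wedge\star\bar\alpha=\lVert\alpha\rVert^2$ recorded just above, which is strictly positive because $\alpha\neq0$.

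First I would turn $\star\bar\alpha$ into a scalar multiple of $\bar\alpha$. As $\star$ is a real operator it commutes with complex conjugation, so $\star\bar\alpha=\overline{\star\alpha}=\bar\lambda\,\bar\alpha$. Substituting this into the identity above gives $\bar\lambda\int_X\alpha\wedge\bar\alpha=\lVert\alpha\rVert^2$, and since $\lvert\lambda\rvert=1$ in every case this yields $\int_X\alpha\wedge\bar\alpha=\lambda\,\lVert\alpha\rVert^2$. Hence $H(\{\alpha\},\{\alpha\})=(-1)^{\frac{n(n+1)}{2}}i^n\,\lambda\,\lVert\alpha\rVert^2$, and the whole assertion reduces to checking that the scalar $(-1)^{\frac{n(n+1)}{2}}i^n\lambda$ is positive real when $\lambda$ is the $\Lambda^n_+$-eigenvalue and negative real when $\lambda$ is the $\Lambda^n_-$-eigenvalue.

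The remaining work, and the only genuine obstacle, is this sign bookkeeping, a pure parity computation. Writing $n=2m$ or $n=2m+1$: when $n$ is even one has $i^n=(-1)^m$ and $(-1)^{n(n+1)/2}=(-1)^m$, so the prefactor is $1$ and $H=\lambda\lVert\alpha\rVert^2=\pm\lVert\alpha\rVert^2$ according as $\lambda=\pm1$; when $n$ is odd one has $i^n=(-1)^m i$ and $(-1)^{n(n+1)/2}=(-1)^{m+1}$, so the prefactor is $-i$ and $H=(-i)\lambda\lVert\alpha\rVert^2=(-i)(\pm i)\lVert\alpha\rVert^2=\pm\lVert\alpha\rVert^2$ according as $\lambda=\pm i$. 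In all four cases $H(\{\alpha\},\{\alpha\})$ equals $+\lVert\alpha\rVert^2$ on $\Lambda^n_+$ and $-\lVert\alpha\rVert^2$ on $\Lambda^n_-$, so both inequalities follow at once from $\lVert\alpha\rVert^2>0$. The one place demanding care is keeping the conjugate in $\star\bar\alpha=\bar\lambda\,\bar\alpha$ straight in the odd case, where silently replacing $\bar\lambda$ by $\lambda$ would reverse both signs.
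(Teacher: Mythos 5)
Your proof is correct and follows essentially the same route as the paper's: evaluate $H$ on the $\Delta$-harmonic representative, use the $\star$-eigenvalue relation $\star\alpha=\lambda\alpha$ together with $\star\bar\alpha=\bar\lambda\bar\alpha$, and reduce everything to the positivity of $\int_X\alpha\wedge\star\bar\alpha=\lVert\alpha\rVert^2$; your prefactor computation $(-1)^{\frac{n(n+1)}{2}}i^n=1$ (resp. $-i$) for $n$ even (resp. odd) matches the paper's $i^{n(n+2)}$. The only difference is presentational: you treat all four sign cases uniformly via the parameter $\lambda$, whereas the paper computes the $\h[+]{n}\xc$ cases explicitly and leaves the rest as ``similarly.''
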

	\begin{proof}
		If $n$ is even, for every class $\{\alpha\}\in\h[+]{n}\xc\setminus\{0\}$, we have $\star\alpha=\alpha$. Then $$H(\{\alpha\},\{\alpha\})=i^{n(n+2)}\int_X\alpha\wedge\bar{\alpha}=\int_X\alpha\wedge\star\bar{\alpha}=\lVert\alpha\rVert^2>0.$$
		
		If $n$ is odd, for every class $\{\alpha\}\in\h[+]{n}\xc\setminus\{0\}$,
		we have $\star\alpha=i\alpha$. We still have $$H(\{\alpha\},\{\alpha\})=i^{n(n+2)}\int_X\alpha\wedge\bar{\alpha}=\int_X\alpha\wedge\star\bar{\alpha}=\lVert\alpha\rVert^2>0.$$
		
		One can prove the second statement similarly.
	\end{proof}
		\begin{theorem}[Local Torelli Theorem {\cite[Thm 5.4]{popovici2019holomorphic}}]
			Let X be a compact Calabi-Yau $\ddb$-manifold of dimension n, and $\pi:\xx\longrightarrow B$ be its Kuranishi family. Then the associated period map
			\begin{align*}
				\mathcal{P}:B& \rightarrow D\subset \mathbb{P}H^n(X,\cc)\\
				t& \mapsto H^{n,0}(X_t,\cc)
			\end{align*}
			is a local holomorphic immersion.
		\end{theorem}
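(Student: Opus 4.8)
The plan is to lift $\mathcal P$ to a holomorphic map into $H^n(X,\cc)$, to compute its differential at the central fibre by means of the Kodaira--Spencer class, and to read off injectivity of that differential from the Calabi--Yau isomorphism $T_{[u]}$. Since each nearby fibre $\xt$ is again a Calabi--Yau $\ddb$-manifold whose Kuranishi family is a neighbourhood of $t$ inside $B$, it suffices to prove that $\mathcal P$ is a holomorphic immersion at the central point $0$; moreover the two lemmas preceding the statement already guarantee that $\mathcal P$ is well defined with image in $D$.

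First I would produce a holomorphically varying generator of the Hodge line. Because $K_{\xt}$ is trivial and $h^{n,0}(t)=h^{n,0}(0)=1$ stays constant, the direct image $\pi_\ast K_{\xx/B}$ is, after shrinking $B$, a line bundle on $B$, a frame of which is a family $(u_t)_{t\in B}$ of nowhere-vanishing holomorphic $n$-forms $u_t$ on $\xt$ depending holomorphically on $t$, with $u_0=u$. Each $u_t$ is $d$-closed for bidegree reasons, so this yields a holomorphic lift $\tilde{\mathcal P}\colon B\to H^n(X,\cc)$, $t\mapsto\{u_t\}$, of $\mathcal P$, whence $\mathcal P$ itself is holomorphic.

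The core of the proof is the differential $d\tilde{\mathcal P}_0$. I would describe the deformation by a Beltrami differential $\phi(t)\in\cinf_{0,1}(X,T^{1,0}X)$ with $\phi(0)=0$ and Kodaira--Spencer class $\theta:=\rho(\partial/\partial t)=[\dot\phi(0)]\in H^{0,1}(X,T^{1,0}X)$, and differentiate at $0$ the condition that $u_t$ be of $J_t$-type $(n,0)$ and $\db_t$-closed. Griffiths transversality at first order (equivalently, the Tian--Todorov contraction computation) shows that the class $\frac{\partial}{\partial t}\big|_0\{u_t\}$ has, in the degree-$n$ Hodge decomposition $\bigoplus_{p+q=n}\h[\db]{p,q}\xc\cong H^n(X,\cc)$, nonzero components only in the summands $\h[\db]{n,0}\xc$ and $\h[\db]{n-1,1}\xc$, its $(n-1,1)$-component being exactly $T_{[u]}([\theta])$. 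Passing to the projectivisation $\mathbb P H^n(X,\cc)$, whose tangent space at the line $\langle\{u\}\rangle$ is $H^n(X,\cc)/\langle\{u\}\rangle$, kills the $(n,0)$-component, so that
\[
d\mathcal P_0(\theta)=T_{[u]}([\theta])\bmod\langle\{u\}\rangle .
\]

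Injectivity of $d\mathcal P_0$ then follows formally: $T_{[u]}$ is an isomorphism onto $\h[\db]{n-1,1}\xc$, and this summand meets the line $\langle\{u\}\rangle\subset\h[\db]{n,0}\xc$ only in $0$ because the two live in distinct pieces of the Hodge decomposition; hence $[\theta]\mapsto T_{[u]}([\theta])\bmod\langle\{u\}\rangle$ is injective, and so is $d\mathcal P_0$. The step I expect to be the main obstacle is precisely the derivative formula, namely the rigorous justification that $\frac{\partial}{\partial t}\big|_0\{u_t\}\equiv T_{[u]}([\theta])$ modulo $\langle\{u\}\rangle$ in the purely $\ddb$, non-K\"ahler framework: one cannot appeal to the harmonic theory of the K\"ahler case, and must instead combine the Aeppli--Bott--Chern Hodge decomposition with the Gauss--Manin connection to give meaning to Griffiths transversality and to control the $t$-dependence of the $(n,0)$-projection. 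Once the contraction formula is in hand, the remaining assertions are immediate consequences of the Calabi--Yau isomorphism and the Hodge decomposition.
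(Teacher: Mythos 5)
Your proposal is correct and follows essentially the same route as the paper's proof: produce a holomorphic family $(u_t)$ of nowhere-vanishing holomorphic $n$-forms to get holomorphy of $\mathcal{P}$, identify $d\mathcal{P}_0$ applied to a tangent vector with Kodaira--Spencer class $[\theta]$ as $[\theta\lrcorner u_0]$ modulo the line $\langle\{u_0\}\rangle$, and conclude injectivity from the Calabi--Yau isomorphism together with the degree-$n$ Hodge decomposition. The step you flag as the main obstacle is resolved in the paper quite concretely: using the $\ddb$-property one chooses a representative $\theta$ of the Kodaira--Spencer class with $d(\theta\lrcorner u_0)=0$, and differentiating $(\Phi_t^{-1})^*u_t$ through an Ehresmann trivialisation $\Phi_t^{-1}:X_0\to X_t$ gives $\frac{\partial}{\partial t}\big|_{t=0}(\Phi_t^{-1})^*u_t=\theta\lrcorner u_0+v$ with $v$ of type $(n,0)$, which is exactly your derivative formula.
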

	\begin{proof}
		Because $X=X_0$ is Calabi-Yau $\ddb$-manifold, we know that $\xt$ is a Calabi-Yau $\ddb$-manifold for all $t$ in a neighbourhood of $0$. Then $H^{n,0}(X_t,\cc)$ is a point in $H^{n,0}(X_t,\cc)$.
		
		Besides, there is a holomorphic family of nowhere vanishing $n$-forms $(u_t)_t\in B$ on X, such that $u_t$ is a $J_t$-holomorphic $(n,0)$-form. Then we know $\h[]{n,0}(X_t,\cc)=\cc u_t$. Hence the period map $\mathcal{P}$ is holomorphic.
		
		If $\mathcal{P}$ is not a local immersion, then we can choose a point in $B$, say $0$, and a tangent vector $\dd/\dd t\in T_0B$, such that $(d\mathcal{P})_0(\dd/\dd t)=0$. Because $X$ is $\ddb$-manifold, we can choose a representative $\theta$ in $\rho(\dd/\dd t)\in\h[]{0,1}(X,T^{1,0}X)$ such that $d(\theta\lrcorner u_0)=0$, where $\rho$ is the Kodaira-Spencer map.
		
		By Ehresmann's lemma, we have a smooth family of diffeomorphisms $\Phi_t^{-1}:X_0\rightarrow X_t$. Taking a set of $J_t$-holomorphic coordinates $z_1(t),\cdots,z_n(t)$, we write $u_t=f_tdz_1(t)\wedge\cdots dz_n(t)$. Therefore, we have
		\begin{equation}\label{cv}
			\dfrac{\partial(\Phi_t^{-1})^*u_t}{\dd t}\rvert_{t=0}=\theta\lrcorner u_0+\dfrac{\partial f_t}{\dd t}\rvert_{t=0}dz_1(t)\wedge\cdots dz_n(t),
		\end{equation}
		where $v:=\dfrac{\partial f_t}{\dd t}\rvert_{t=0}dz_1(t)\wedge\cdots dz_n(t)$ is a $(n,0)$-form and $\theta\lrcorner u_0$ is a $(n-1,1)$-form. Then we know that $\theta=0$. Hence the period map $\mathcal{P}$ is a local immersion.

	\end{proof}
		\section{Metrics on $B$}\label{secmetrics}
		In this section we compare two versions of Weil-Petersson metric and the metric induced by the period map given in the previous section.\\
		
		We use the definition of $\omega$-minimal $d$-closed representative given in \cite{popovici2019holomorphic}. The $\omega$-minimal $d$-closed representative of a Dolbeaut cohomology class $[\beta]$ is $\beta_{min}=\beta+\db v_{min}$ , where $\beta$ is the $\Delta''$-harmonic representative in $[\beta]$ and $v_{min}$ is the solution of minimal $L^2$-norm of $\dd\beta=-\ddb v$.
		\begin{definition}
			Let $(u_t)_{t\in B}$ be a fixed holomorphic family of non-vanishing holomorphic $n$-forms on the fibres $(X_t)_{t\in B}$ and let $(\omega_t)_{t\in B_{[\omega]}}$ be a smooth family of SKT metrics on the fibres  $(X_t)_{t\in B_{_{[\omega]}}}$ such that $\omega_t\in\{\omega\}$ for any $t$ and $\omega_0=\omega$.  The Weil-Petersson metrics $G^{(1)}_{WP}$ and $G^{(2)}_{WP}$ are defined on $B_{[\omega]}$ by
			$$ G^{(1)}_{WP}([\theta_t],[\eta_t])\,:=\,\dfrac{\langle\langle \theta_t,\eta_t \rangle\rangle_{\omega_t}}{\int_{X_t}dV_{\omega_t}}$$
			$$ G^{(2)}_{WP}([\theta_t],[\eta_t])\,:=\,\dfrac{\langle\langle \theta_t\lrcorner u_t,\eta_t\lrcorner u_t \rangle\rangle_{\omega_t}}{i^{n^2}\int_{X_t} u_t\wedge \overline{u}_t}$$
			for any $t \in B_{_{[\omega]}}$, $ [\theta_t],[\eta_t]\in  H^{0,1}(X_t,T^{1,0}X_t)_{[\omega]}$. Here $\theta_t$ (resp. $\eta_t$) is chosen  such that $\theta_t\lrcorner u_t$ (resp. $\eta_t\lrcorner u_t$) is the $\omega_t$-minimal $d$-closed representative of the class $[\theta_t\lrcorner u_t]\in  H^{n-1,1}(X_t,\cc)$ (resp. $[\eta_t\lrcorner u_t]\in  H^{n-1,1}(X_t,\cc)$).
			
		\end{definition}
		\begin{remark}
			Denote the $(1,1)$-forms associated with $G_{WP}^{(1)}$, $G_{WP}^{(2)}$ by $\omega_{WP}^{(1)}$, $\omega_{WP}^{(2)}$. If $Ric(\omega_t)=0$ for all $t\in B_{[\omega]}$, we have $\omega_{WP}^{(1)}=\omega_{WP}^{(2)}$.
		\end{remark}
		
		Let $L=\mathcal{O}_{\mathbb{P}H^n(X,\cc)}(-1)$ be the tautological line bundle on $\mathbb{P}H^n(X,\cc)$.
		
		We set:
		\begin{align*}
			C_+&:=\{\{\alpha\}\in\h[]{n}\xc\arrowvert H(\{\alpha\},\{\alpha\})>0\};\\
			C_-&:=\{\{\alpha\}\in\h[]{n}\xc\arrowvert H(\{\alpha\},\{\alpha\})<0\};\\
			U^n_+&:=\{[l]\in\mathbb{P}H^n\xc\arrowvert l\text{ is a complex line such that }l\subset C_+\};\\
			U^n_-&:=\{[l]\in\mathbb{P}H^n\xc\arrowvert l\text{ is a complex line such that }l\subset C_-\}.
		\end{align*}
		Then Im$\mathcal{P}$ is a subset of $U^n_+$ when $n$ is even, Im$\mathcal{P}$ is a subset of $U^n_-$ when $n$ is odd.
		We can get a Hermitian fibre metric $h_L^+$ on $L\arrowvert_{U^n_+}$ from $H$. Then the associated Fubini-Study metric on $U^n_+$ is
		\[\omega^+_{FS}=-i\Theta_{h^+_L}(L\arrowvert_{U^n_+}).\]
		Similarly, we get the associated Fubini-Study metric on $U^n_-$:
		\[\omega^-_{FS}=-i\Theta_{h^-_L}(L\arrowvert_{U^n_-}).\]
		Then we have a Hermitian metric $\gamma$ on $B$:
		\begin{equation*}
			\gamma:=\begin{cases}
				\mathcal{P}^*\omega^+_{FS}&\text{if $n$ is even,}\\
				\mathcal{P}^*\omega^-_{FS}&\text{if $n$ is odd.}
			\end{cases}
		\end{equation*}
		\begin{lemma}\label{thirdmetric}
			
			The K\"ahler metric $\gamma$ defined on $B$ is independent of the choice of metrics on $(X_t)_{t\in B}$ and is explicitly given by the formula:
			\begin{align*}
				\gamma_t([\theta_t],[\theta_t])&=\dfrac{-\int_X (\theta_t\lrcorner u_t)\wedge\overline{(\theta_t\lrcorner u_t)}}{i^{n^2}\int_X u_t\wedge\overline{u_t}}=\dfrac{-H(\theta_t\lrcorner u_t,\theta_t\lrcorner u_t)}{i^{n^2}\int_X u_t\wedge\bar{u_t}}, \text{if $n$ is even,}\\
				\gamma_t([\theta_t],[\theta_t])&=\dfrac{-i\int_X (\theta_t\lrcorner u_t)\wedge\overline{(\theta_t\lrcorner u_t)}}{i^{n^2}\int_X u_t\wedge\bar{u_t}}=\dfrac{H(\theta_t\lrcorner u_t,\theta_t\lrcorner u_t)}{i^{n^2}\int_X u_t\wedge\bar{u_t}}, \text{if $n$ is odd,}
			\end{align*}
		for every $t\in B$ and every $[\theta_t]\in H^{0,1}(X_t,T^{1,0}X_t)$.
		\end{lemma}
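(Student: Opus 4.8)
The plan is to compute $\gamma=\mathcal{P}^*\omega^\pm_{FS}$ directly, exploiting the fact that the tautological bundle $L=\mathcal{O}_{\mathbb{P}H^n\xc}(-1)$ admits the holomorphic local lift $t\mapsto\{u_t\}$ of the period map $\mathcal{P}$, so that the Fubini--Study form pulls back to $\pm i\ddb\log H(\{u_t\},\{u_t\})$ (the sign depending on the curvature convention). First, the independence of the choice of fibre metrics is immediate: the sesquilinear form $H$, the cones $C_\pm$ and $U^n_\pm$, the fibre metric $h^\pm_L$, and hence $\omega^\pm_{FS}$ are all defined purely through the pairing $\int_X\alpha\wedge\bar\beta$ and the holomorphic family $(u_t)_t$; no Hermitian metric on the fibres enters. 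A metric was used earlier only to exhibit the splitting $\h{n}\xc=\h[+]{n}\xc\oplus\h[-]{n}\xc$ and to locate $\mathrm{Im}\,\mathcal{P}$ inside $U^n_\pm$, not to define $\gamma$. Thus $\gamma$ depends only on $(u_t)_t$ and the topological pairing.

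For the explicit formula, fix $t$ and a tangent vector $\partial/\partial t$ mapping to $[\theta]\in\h[]{0,1}(X_t,T^{1,0}X_t)$ under Kodaira--Spencer, chosen (as in the proof of the local Torelli theorem) so that the representative $\theta$ satisfies $d(\theta\lrcorner u_t)=0$. Then by (\ref{cv}) the derivative of the period point is
\begin{equation*}
\frac{\partial}{\partial t}\{u_t\}=\{\theta\lrcorner u_t+v\},\qquad v\ \text{an }(n,0)\text{-form with }\{v\}\in\h[]{n,0}(X_t,\cc)=\cc\{u_t\}.
\end{equation*}
Writing $U=\{u_t\}$ and $U'=\{\theta\lrcorner u_t\}+\lambda U$ with $\lambda\in\cc$, the standard $\ddb\log$ identity for the pullback of the Fubini--Study form gives
\begin{equation*}
\gamma\Big(\tfrac{\partial}{\partial t},\tfrac{\partial}{\partial t}\Big)=\pm\frac{H(U',U')\,H(U,U)-\lvert H(U',U)\rvert^2}{H(U,U)^2}.
\end{equation*}

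The crucial simplification is the bidegree orthogonality $H(\theta\lrcorner u_t,u_t)=0$: indeed $H(\cdot,\cdot)$ is a constant multiple of $\int_X(\cdot)\wedge\overline{(\cdot)}$, and $(\theta\lrcorner u_t)\wedge\overline{u_t}$ is of type $(n-1,n+1)=0$. Hence $H(U',U)=\lambda H(U,U)$ and $H(U',U')=H(\theta\lrcorner u_t,\theta\lrcorner u_t)+\lvert\lambda\rvert^2H(U,U)$, so the $\lambda$-terms cancel and
\begin{equation*}
\gamma\Big(\tfrac{\partial}{\partial t},\tfrac{\partial}{\partial t}\Big)=\mp\frac{H(\theta\lrcorner u_t,\theta\lrcorner u_t)}{H(U,U)}.
\end{equation*}
This exhibits the answer as independent of the choice of representative $\theta$ and of the $(n,0)$-part $v$, reflecting the projective invariance of the construction. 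Finally I would translate $H$ back into integrals using its definition: for $n$ even $(-1)^{n(n+1)/2}i^n=i^{n^2}$ while for $n$ odd $(-1)^{n(n+1)/2}i^n=-i^{n^2}$, which turns $H(U,U)$ into $i^{n^2}\int_Xu_t\wedge\bar u_t$ (resp. $-i^{n^2}\int_Xu_t\wedge\bar u_t$) and produces exactly the two displayed formulas, the even/odd dichotomy in the numerator sign being absorbed here.

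I expect the main obstacle to be the careful sign and normalization bookkeeping: pinning down the curvature sign of $h^\pm_L$ on the indefinite cone $U^n_\pm$ (so that $\gamma$ comes out positive, i.e.\ K\"ahler), and matching it against the $i^{n^2}$ normalization through the Hodge--Riemann signs $H(\alpha,\alpha)=(-1)^p\lVert\alpha\rVert^2$ on primitive classes of $\h[]{p,q}\xc$ with $p+q=n$. The orthogonality step and the cancellation of the vertical $(n,0)$-component are conceptually the heart of the argument, but become routine once the bidegree bookkeeping is in place.
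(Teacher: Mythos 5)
Your proposal is correct and takes essentially the same route as the paper: you pull back the Fubini--Study form along the holomorphic lift $t\mapsto u_t$, identify the derivative of the period point as $\{\theta\lrcorner u_t\}+\lambda\{u_t\}$ via (\ref{cv}), kill the $\lambda$-terms by the bidegree orthogonality $H(\theta\lrcorner u_t,u_t)=0$, and convert via $(-1)^{n(n+1)/2}i^n=i^{n^2}$ ($n$ even) resp.\ $-i^{n^2}$ ($n$ odd) --- which is precisely the paper's argument, where your ``standard $\ddb\log$ identity'' appears as the explicit second-derivative computation of $-\dd_t\db_t\log H(u_t,u_t)$ and your $\lambda$ is the constant $C$ with $v=Cu_0$.
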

		\begin{proof}
			Because $X$ is a Calabi-Yau manifold, there is a family of nowhere vanishing $J_t$-$(n,0)$-forms $(u_t)_{t\in B}$, such that $u_t$ is $J_t$-holomorphic.
			
			When $n$ is even, we have $|u_t|_{h^+_L}^2=H(u_t,u_t)$.
			Now we know that
			\[\omega_{FS}^+=-i\dd_t\db_t\log(H(u_t,u_t)).\] 
			Taking a class $[\theta]$ in $\h[]{0,1}(X,T^{1,0}X)$, assume $[\theta]$ is the image of $\frac{\dd}{\dd t}|_{t=0}$ under the Kodaira-Spencer map. Then		\[\gamma_0([\theta],[\theta])=-\dfrac{\dd^2\log(H(u_t,u_t))}{\dd t\dd \bar{t}}|_{t=0}=-\frac{\dd}{\dd t}(\dfrac{H(u_t,\dfrac{\dd u_t}{\dd t})}{H(u_t,u_t)})|_{t=0}.\]
			Because the left hand side of (\ref{cv}) is $d$-closed, $v$ is a $J_0$-holomorphic $(n,0)$-form. Thus, $v$ is $Cu_0$ for some constant $C$. So we have
			\begin{align*}
				&-\frac{\dd}{\dd t}(\dfrac{H(u_t,\dfrac{\dd u_t}{\dd t})}{H(u_t,u_t)})|_{t=0}\\=&-\dfrac{H(\dfrac{\dd u_t}{\dd t}|_{t=0},\dfrac{\dd u_t}{\dd t}|_{t=0})H(u_0,u_0)-H(\dfrac{\dd u_t}{\dd t}|_{t=0},u_0)H(u_0,\dfrac{\dd u_t}{\dd t}|_{t=0})}{H^2(u_0,u_0)}\\
				=&-\dfrac{H(\theta\lrcorner u_0,\theta\lrcorner u_0)H(u_0,u_0)+C\bar{C}H^2(u_0,u_0)-C\bar{C}H^2(u_0,u_0)}{H^2(u_0,u_0)}\\
				=&-\dfrac{H(\theta\lrcorner u_0,\theta\lrcorner u_0)}{H(u_0,u_0)}.
			\end{align*}
			The calculation of the case that $n$ is odd differs with only a $(-1)$ factor.
		\end{proof}

		Denote the space of global smooth forms of bidegree $(n-1,1)$ by $\Lambda^{n-1,1}$. Then we have two decompositions. The first one is Lefschetz decompositon:
		\[\Lambda^{n-1,1}=\Lambda^{n-1,1}_{prim}\oplus(\omega\wedge\Lambda^{n-2,0}).\]
		The second one is the decomposition into the eigenspaces of Hodge star operator:
		\[\Lambda^{n-1,1}=\Lambda^{n-1,1}_+\oplus\Lambda^{n-1,1}_-.\]
		\begin{lemma}\label{lemma17}
			These two decompositions coincide up to order. Specifically, we have
			
			\begin{itemize}
				\item $\Lambda^{n-1,1}_{prim}=\Lambda^{n-1,1}_-$ and $\omega\wedge\Lambda^{n-2,0}=\Lambda^{n-1,1}_+$ if $n$ is even,
			
			\item$\Lambda^{n-1,1}_{prim}=\Lambda^{n-1,1}_+$ and $\omega\wedge\Lambda^{n-2,0}=\Lambda^{n-1,1}_-$ if $n$ is odd.
			\end{itemize}
			
		\end{lemma}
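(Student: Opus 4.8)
The plan is to reduce the statement to a pointwise computation and to identify each summand of the Lefschetz decomposition with a $\star$-eigenspace. Since both decompositions are defined fibrewise by linear algebra on $\Lambda^{n-1,1}T^*_xX$ equipped with $\omega_x$, I would fix a point $x\in X$ and argue there. First I would record that $\star$ maps $\Lambda^{n-1,1}$ to itself, since $\star:\Lambda^{p,q}\to\Lambda^{n-q,n-p}$ and $(n-q,n-p)=(n-1,1)$ when $(p,q)=(n-1,1)$; and that on $\Lambda^{n-1,1}\subset\Lambda^n$ one has $\star^2=(-1)^n\mathrm{id}$, so the eigenvalues of $\star$ are $\pm1$ when $n$ is even and $\pm i$ when $n$ is odd, with $\Lambda^{n-1,1}_\pm$ the corresponding eigenspaces fixed in the previous section. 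It then suffices to show that $\star$ acts as a scalar on each Lefschetz summand and to match these scalars against the eigenvalue labels.

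The heart of the argument is the eigenvalue computation on the two summands. For a primitive form $v\in\Lambda^{n-1,1}_{prim}$ one has $p+q=n$, so formula (\ref{voiprim}) gives $\star v=(-1)^{n(n+1)/2}i^{n-2}v$; evaluating this scalar according to the parity of $n$ yields $-1$ when $n$ is even and $i$ when $n$ is odd. Hence $\Lambda^{n-1,1}_{prim}\subseteq\Lambda^{n-1,1}_-$ for $n$ even and $\Lambda^{n-1,1}_{prim}\subseteq\Lambda^{n-1,1}_+$ for $n$ odd. For the other summand I note that every $w\in\Lambda^{n-2,0}$ is primitive for bidegree reasons, and I would invoke the extension of (\ref{voiprim}) to powers of the Lefschetz operator, namely Weil's identity $\star(L^j\alpha)=(-1)^{k(k+1)/2}i^{p-q}\frac{j!}{(n-k-j)!}L^{n-k-j}\alpha$ for $\alpha$ primitive of weight $k=p+q$ (see \cite{voisin_2002}), applied with $\alpha=w$, $k=n-2$, $j=1$, so that $n-k-j=1$. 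This gives $\star(\omega\wedge w)=(-1)^{(n-2)(n-1)/2}i^{n-2}(\omega\wedge w)$, whose scalar is $+1$ for $n$ even and $-i$ for $n$ odd; hence $\omega\wedge\Lambda^{n-2,0}\subseteq\Lambda^{n-1,1}_+$ for $n$ even and $\omega\wedge\Lambda^{n-2,0}\subseteq\Lambda^{n-1,1}_-$ for $n$ odd.

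Finally I would upgrade these inclusions to equalities by a complementarity argument. Take $n$ even (the odd case is identical with the roles of $+$ and $-$ exchanged). Given $\xi\in\Lambda^{n-1,1}_-$, write $\xi=\xi_{prim}+\omega\wedge\psi$ by the Lefschetz decomposition; then $\omega\wedge\psi=\xi-\xi_{prim}$ lies in $\Lambda^{n-1,1}_-$ while also lying in $\Lambda^{n-1,1}_+$ by the previous step, and since eigenspaces for distinct eigenvalues meet only in $0$, we obtain $\omega\wedge\psi=0$ and $\xi=\xi_{prim}\in\Lambda^{n-1,1}_{prim}$. This proves $\Lambda^{n-1,1}_-=\Lambda^{n-1,1}_{prim}$, whence also $\Lambda^{n-1,1}_+=\omega\wedge\Lambda^{n-2,0}$, which is the asserted coincidence.

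I expect the only genuine obstacle to be the computation of $\star$ on the non-primitive summand $\omega\wedge\Lambda^{n-2,0}$: formula (\ref{voiprim}) as stated applies only to primitive forms, so one must either cite Weil's general identity for $\star L^j$ or reprove the single instance $j=1$, $k=n-2$ by a short computation in a unitary coframe. Everything else is bookkeeping of the two scalars $(-1)^{n(n+1)/2}i^{n-2}$ and $(-1)^{(n-2)(n-1)/2}i^{n-2}$ according to the parity of $n$.
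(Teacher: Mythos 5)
Your proof is correct, and its skeleton matches the paper's: compute the scalar by which $\star$ acts on each Lefschetz summand of $\Lambda^{n-1,1}$, then upgrade the two inclusions to equalities using the fact that both decompositions are direct sums and distinct eigenspaces meet only in $0$ (a step the paper compresses into ``it suffices to prove'', and which you rightly make explicit). The genuine difference is in the step you yourself flagged as the only obstacle: the action of $\star$ on $\omega\wedge\Lambda^{n-2,0}$. You invoke Weil's general identity $\star(L^j\alpha)=(-1)^{k(k+1)/2}\,i^{p-q}\,\frac{j!}{(n-k-j)!}\,L^{n-k-j}\alpha$ for primitive $\alpha$, which immediately yields the scalar $(-1)^{(n-2)(n-1)/2}i^{n-2}$; this is correct and equals the paper's $i^{n(n-2)}$ (namely $+1$ for $n$ even, $-i$ for $n$ odd). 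The paper instead stays within the single formula (\ref{voiprim}) it has set up: it applies (\ref{voiprim}) only to the primitive $(n-2,0)$-form $v$ itself, getting $\star v=\tfrac{i^{n(n-2)}}{2}\,\omega^2\wedge v$, and then determines $\star(\omega\wedge v)$ indirectly by a duality computation — pairing against an arbitrary $u=u_{prim}+\omega\wedge u_1\in\Lambda^{n-1,1}$, using orthogonality of the Lefschetz decomposition and the identity $\langle\langle\omega\wedge u_1,\omega\wedge\bar v\rangle\rangle=2\langle\langle u_1,\bar v\rangle\rangle$, to conclude $\int_X u\wedge\star(\omega\wedge v)=i^{n(n-2)}\int_X u\wedge\omega\wedge v$ for all $u$, whence the pointwise identity by non-degeneracy of the wedge pairing. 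So your route is pointwise and more direct, at the price of citing a stronger standard result (or reproving that one instance, as you note); the paper's is self-contained relative to (\ref{voiprim}), at the price of a global integration argument whose final passage from the integral identity to the pointwise one is left implicit. Both establish the same eigenvalue bookkeeping, and your parity computations of the two scalars are accurate.
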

		\begin{proof}
			For a primitive form $u$ of bidegree $(n-1,1)$, we have $\star u=(-1)^{n(n+1)/2}i^{n-2}u$. Therefore, we get $\Lambda^{n-1,1}_{prim}\subset\Lambda^{n-1,1}_-$  if $n$ is even, and $\Lambda^{n-1,1}_{prim}\subset\Lambda^{n-1,1}_+$ if $n$ is odd.
			
			Now, it suffices to prove that for a form $v$ of bidegree $(n-2,0)$, $\star(\omega\wedge v)=\omega\wedge v$ when $n$ is even, and $\star(\omega\wedge v)=-i\omega\wedge v$ when $n$ is odd.
			Firstly, for every form $u$ of bidegree $(n-1,1)$ we have a decomposition $u=u_{prim}+\omega\wedge u_1$. Besides, $v$ is primitive because it is of bidegree $(n-2,0)$. Hence, we get $\star v=\frac{i^{n(n-2)}}{2}v\wedge \omega^2$. Then
			\begin{equation*}
				\begin{aligned}
				\int_X u\wedge\star(\omega\wedge v)=&\langle\langle u,\omega\wedge\bar{v}\rangle\rangle\\
				=&\langle\langle \omega\wedge u_1,\omega\wedge\bar{v}\rangle\rangle\\
				=&2\langle\langle u_1,\bar{v}\rangle\rangle\\
				=&2\int_X u_1\wedge\star v\\
				=&i^{n(n-2)}\int_X u_1\wedge \omega^2\wedge v\\
				=&i^{n(n-2)}\int_X u\wedge \omega\wedge v.
				\end{aligned}
			\end{equation*}
			Therefore we have $\star(\omega\wedge v)=i^{n(n-2)}\omega\wedge v$.
		\end{proof}
		By Lemma \ref{lemma17}, for any $\theta\in\cinf_{0,1}(X,T^{1,0}X)$, we have the decomposition:
		\[\theta\lrcorner u=\theta'\lrcorner u+\omega\wedge\zeta.\]
		By orthogonality, we have
		\[G^{(2)}_{WP}([\theta_t],[\theta_t])\,=\,\dfrac{\langle\langle \theta_t\lrcorner u_t,\theta_t\lrcorner u_t \rangle\rangle}{i^{n^2}\int_{X_t} u_t\wedge \overline{u}_t}=\dfrac{\lVert\theta'_t\lrcorner u_t\rVert^2+2\lVert\zeta_t\rVert^2}{i^{n^2}\int_{X_t} u_t\wedge \overline{u}_t}.\]
		If $n$ is even, by Lemma \ref{lemma17}, we have $\star(\theta'\lrcorner u)=-\theta'\lrcorner u$ and $\star(\omega\wedge\zeta)=\omega\wedge\zeta$. As a consequence, we have
		\begin{equation*}
			\begin{aligned}
				\int_X (\theta_t\lrcorner u_t)\wedge\overline{(\theta_t\lrcorner u_t)}=&\int_X (\theta'_t\lrcorner u_t+\omega_t\wedge\zeta_t)\wedge\star(\overline{-\theta'_t\lrcorner u_t+\omega_t\wedge\zeta_t})\\
				=&-\lVert\theta'_t\lrcorner u_t\rVert^2+2\lVert\zeta_t\rVert^2.
			\end{aligned}
		\end{equation*}
		Then we have 
		\[\gamma_t([\theta_t],[\theta_t])=\dfrac{\lVert\theta'_t\lrcorner u_t\rVert^2-2\lVert\zeta_t\rVert^2}{i^{n^2}\int_{X_t} u_t\wedge \overline{u}_t}.\]
		Similarly, we get the same expression for $n$ odd.
		\begin{remark}
				For all $[\theta_t]\in H^{0,1}(X_t,T^{1,0}X_t)_{[\omega]}\setminus\{0\}$, we have
			\begin{equation*}
				(G_{WP}^{(2)}-\gamma)_t([\theta_t],[\theta_t])=\dfrac{4\|\zeta_t\|^2}{i^{n^2}\int_{X_t} u_t\wedge \overline{u}_t}\ge 0.
			\end{equation*}
		Hence if every class in $H^{n-1,1}_{prim}(X_t,\cc)$ has a $d$-closed and primitive representative, or equivalently, $H^{n-1,1}_{BC,prim}(X_t,\cc)$ has a primitive representative, we would get $G_{WP}^{(2)}=\gamma$. The polarised deformation of a K\"ahler manifold satisfies this condition in \cite{tian1987smoothness}.
		\end{remark}
		\bibliographystyle{alpha}
		\bibliography{bib}	
		\vspace*{2em}
		Institut de Math\'ematiques de Toulouse,\\
		Universit\'e Paul Sabatier,\\
		118 route de Narbonne, 31062 Toulouse, France\\
		Email: yi.ma@math.univ-toulouse.fr

\end{document}